\newcounter{mnotecount}[section]
\def\eps {{\epsilon}}
\theoremstyle{plain}
\newtheorem{theorem}{Theorem}
\newtheorem{corollary}{Corollary}
\newtheorem{lemma}{Lemma}
\newtheorem{proposition}{Proposition}
\theoremstyle{definition}
\newtheorem{example}[theorem]{Example}
\newtheorem{definition}{Definition}
\newtheorem{remark}{Remark}
\begin{document}

\title{Velocity Averaging for the Wigner Kinetic Equation\\ in the Semiclassical Regime}

\author{François Golse$^1$ and Jakob Möller$^2$\\
$^1$\'Ecole polytechnique \& IP Paris, CMLS\\
F-91128 Palaiseau Cedex, France\\
\texttt{francois.golse@polytechnique.edu}\\
\vskip .3cm
$^2$Universität Wien \& Wolgang Pauli Institute\\
O. Morgenstern Platz 1, A-1090 Wien, Austria\\
\texttt{jakob.moeller@univie.ac.at}}

\date{\today}

\maketitle

\begin{abstract}
This paper discusses the possibility of applying the velocity averaging theorems in [F. Golse, P.-L. Lions, B. Perthame, R. Sentis: J. Funct. Anal. 76(1):110--125, 1988] to the Wigner equation governing 
the quantum evolution of the Wigner transform of quantum density operators. Our first main results address the case of the Wigner function of a special class of density operators associated to mixed 
states, whose Hilbert-Schmidt norm is of order $\hbar^{d/2}$, where $d$ is the space dimension and where $\hbar$ is the reduced Planck constant. We obtain estimates which are uniform in $\hbar$ in 
the semiclassical limit $\hbar \rightarrow 0$. In space dimension $d=1$, we prove that the density function belongs to the Sobolev space $H^s(\mathbb R)$ for some $s>0$. In the case of pure states, 
we first obtain a characterization of the Wigner transform of rank-one quantum density operators, and apply this characterization (1) to analyze a rather general setting in which velocity averaging cannot 
apply to the Wigner functions of a family of rank-one density operators whose evolution is governed by the von Neumann equation, and (2) to obtain a quick derivation of Madelung's system of quantum 
hydrodynamic equations. This derivation provides a physical explanation of one key assumption used in the proof of the negative result (1) described above.
\bigskip

\noindent
\textbf{Key Words:} Quantum dynamics; Density operator; Wigner transform; Velocity averaging; Bohm potential; Madelung equations

\noindent
\textbf{MSC:} 81S30; 81Q20; 35B65
\end{abstract}


\section{Introduction}


The free transport, or advection operator $\partial_t+c\partial_x$ is the prototype of hyperbolic differential operators, together with the d'Alembert, or wave operator 
\[
\partial^2_t-c^2\partial^2_x=(\partial_t+c\partial_x)(\partial_t-c\partial_x).
\]
The method of characteristics shows that singularities of the initial data propagate in the solution of the Cauchy problem for the free transport 
equation, which cannot be more regular than its data.

Kinetic models for gases and plasmas systematically involve the free transport operator $\partial_t+v\cdot\nabla_x$ on $\mathbb R_t\times\mathbb R^d_x\times\mathbb R^d_v$, with a velocity $v\in\mathbb R^d$
which is itself a variable. In other words, the (main) unknown function in the kinetic theory of gases is $f\equiv f(t,x,v)$, the phase space density at time $t$ of the number of gas molecules at the position $x$ with
velocity $v$. All kinetic equations for gases or plasmas take the form 
\[
(\partial_t+v\cdot\nabla_x)f(t,x,v)=S[f](t,x,f)
\]
where $S[f]$ is a functional of the unknown $f$, such as the collision integral in the case of the Boltzmann or the Landau equations, or a term of the form $S[f]=\nabla_v\cdot (fF[f])$, where $F[f]$ is a mean-field,
self-consistent acceleration created by the unknown distribution $f$ itself, such as gravity in the case of the Vlasov-Poisson equation used in cosmology, or the electromagnetic force in the case of the 
Vlasov-Maxwell equation used in plasma physics.

If $S[f]\in L^2_{loc}(\mathbb R\times\mathbb R^d\times\mathbb R^d)$ and if $\tau+v\cdot\xi\not=0$, then $(t,x)\mapsto f(t,x,v)$ is microlocally in the Sobolev space $H^1$ in some conical neighborhood of $(\tau,\xi)$
around each $(t,x)\in\mathbb R\times\mathbb R^d$. On the other hand, if $f$ itself belongs to $L^2_{loc}(\mathbb R\times\mathbb R^d\times\mathbb R^d)$, the set of velocities $v\in\mathbb R^d$ such that 
$\tau+v\cdot\xi=0$ is an affine plane in $\mathbb R^d$, so that its Lebesgue measure is equal to $0$. Therefore, one can expect that there exists $s>0$ such that
\[
\int_{|v|\le R}f(t,x,v)\dd v\in H^s_{loc}(\mathbb R\times\mathbb R^d)
\]
(where $H^s$ designates the Sobolev space of functions with derivatives of order $\le s$ in $L^2$) for all $R>0$. In other words, averages in $v$ of $f$ are more regular in $t,x$ than $f$ itself. 

They were first  studied independently in \cite{agoshkov1984} and \cite{bgp1985} in the $L^2$ case by studying the Fourier transform of $\partial_t +v\cdot \nabla$, and more systematically in \cite{golse1988regularity}. 
The subsequent literature is rich: the $L^p$ case was studied more completely e.g. in \cite{diperna1991Lp, bezard1994regularite, arsenio2019maximal}. Velocity averaging fails in $L^1$ in general, due to possible 
concentrations in the $v$ variable, which offset the benefits of averaging: see for instance Example 1 on pp. 123--124 in \cite{golse1988regularity}. We shall see later that this example is important in the present paper. 
The concentration in $v$ is essentially the only obstruction to velocity averaging in $L^1$. Indeed, under some additional condition of equiintegrability in $\xi$ on $\{f\}$, it was proved in \cite{golse2002velocity}  that 
velocity averaging yields strong compactness of averages in $v$ of families $\{f\}$ that are bounded in $L^1_{x,v}$ and such that $\{v\cdot\nabla_xf\}$ is also bounded in $L^1_{x,v}$. Very recently, new methods have 
been used to prove new velocity averaging lemmas: see for instance \cite{arsenio2021energy, jabin2022commutator}. 

The interested reader will find a rather complete survey of the main results in this direction published before 2000 in chapter 1 of \cite{bouchut2000kinetic}.

Velocity averaging lemmas have been very useful in the study of kinetic equations because they provide strong compactness in $L^1_{loc}$, and therefore justify passing to the limit in certain nonlinearities.
This is a key step in the construction of global weak solutions for the Boltzmann \cite{DPL}, and the Vlasov-Maxwell equations \cite{diperna1989global}, for instance.

\bigskip
On the other hand, the Wigner transform $W_{\hbar}$ of quantum density operators, first introduced in \cite{wigner1997quantum}, is the quantum mechanical analogue of the phase space density in classical kinetic 
equations. In particular it has the correct macroscopic quantities (or observable densities). For instance, the particle density is the zeroth order moment of the Wigner function in the $\xi$-variable, the current density 
is its first order moment and the energy density its second order moment. However, an important caveat is that the Wigner transform takes negative values in general --- coherent states, i.e. Gaussian wave packets 
being the only pure states whose Wigner transform is positive \cite{piquet,hudson,marko-ringho-schmei}. Nevertheless, the Wigner function $W_{\hbar}$ obeys the Wigner equation, which is the quantum analogue 
to classical kinetic equations for the phase space density. In the case of point particles accelerated by an external potential $V$, the Wigner function $W_{\hbar}$ solves
\[
     \partial_tW_{\hbar} +\xi \cdot \nabla_x W_{\hbar} - \theta[V]W_{\hbar} = 0,
\]
where $\theta[V]$ is a pseudodifferential operator applied to $W_{\hbar}$, whose expression is recalled in the next section. This suggests that velocity averaging results applied to $W_{\hbar}$ should provide valuable 
information on observable densities, i.e. on moments in $\xi$ of the Wigner function $W_{\hbar}$. For instance, one could think of using velocity averaging in the context of the classical limit of quantum dynamics, e.g. 
to pass to the limit in the nonlinearities which appear in the system of Madelung equations governing the limiting density function and current density.

 In order to pass to the limit $\hbar \rightarrow 0$ it is necessary to obtain estimates which are uniform in $\hbar$. An averaging lemma that gives such estimates is called \emph{semiclassical averaging lemma}, 
 cf. \cite{GolseMauserMoller}, as opposed to estimates that might blow up when $\hbar \rightarrow 0$ (such averaging lemmas are called \emph{quantum averaging lemmas}). We prove such a result for the Wigner 
 equation with potential in Theorem \ref{thm:VAWignerL2} for the Wigner function of a special class of mixed states.

The semiclassical limit of Madelung's equations has been studied in the case of the defocussing, cubic
nonlinear Schr\"odinger equation in space dimension one in \cite{JinLvrmrMcLaughlin} by the methods of integrable systems. Velocity averaging does not play any role in \cite{JinLvrmrMcLaughlin}, and we shall see
below that there is a serious objection to using velocity averaging on families of pure quantum states in the semiclassical regime.

\smallskip
The purpose of this paper is to understand how the methods of velocity averaging can be used in the context of quantum dynamics sketched above, where one has to distinguish between quantum and semiclassical 
velocity averaging on the one hand and pure and mixed states on the other hand. The outline of our discussion of this problem is as follows: section \ref{sec:Prelim} recalls some important features of the Wigner transform, used in 
the sequel. 

Next we study velocity averaging results in an $L^2$ setting adapted to the equations of quantum dynamics, i.e. the von Neumann equation governing the evolution of quantum density operators associated to quantum 
states. 

Our first main result in this direction is Theorem \ref{thm:VAWignerL2} in section \ref{sec:VAMixedStWig}, which deals with a special class of mixed states for density operators bounded in Hilbert-Schmidt norm
independently, or uniformly in the Planck constant. 

Section \ref{sec:VAInterp} studies the same problem in the special case of space dimension one with a different approach, leading to a physically more satisfying result, bearing on the ``true'' density function, instead 
of moments of the Wigner functions involving a truncation for large values of the momentum variable as in Theorem \ref{thm:VAWignerL2}. This result, which is our second main result in this paper, is stated as 
Theorem \ref{thm:VAd=1} in section \ref{sec:VAInterp}. 

Section \ref{sec:PureSt} discusses the case of pure states, left aside in sections \ref{sec:VAMixedStWig} and \ref{sec:VAInterp}, focussed on mixed states only. Proposition \ref{P-Monokinetic}, our third main result, 
shows that sequences of pures states with vanishing Bohm pressure in the classical limit, with density functions and current densities converging strongly in $L^2_{loc}$ and kinetic energy densities converging weakly 
in $L^2_{loc}$ have monokinetic Wigner measures.  (We recall that Wigner measures are limit points in the sense of distributions of the Wigner functions in the classical limit.) This concentration effect on the momentum 
variable in the Wigner measure explains why velocity averaging cannot be applied to families of pure states in the semiclassical setting. 

The key argument in the proof of Proposition \ref{P-Monokinetic} is an identity \eqref{IdentPure} which follows from a characterization of pure states reported in Lemma \ref{L-CharRk1}, which generalizes to the case 
of all space dimensions (and makes mathematically rigorous) an earlier observation in \cite{tatarskiui1983wigner} due to Tatarski\u\i. This characterization of pure states and further consequences thereof are discussed
in section \ref{sec:WignerPureSt}.

Finally, section \ref{sec:DerMadelung} explains how \eqref{IdentPure} coming from our characterization of the Wigner transforms of pure states in Lemma \ref{L-CharRk1} leads to a quick derivation of the system of 
Madelung fluid dynamical equations for the mass and current densities of pure states governed by the Schr\"odinger equation with Lipschitz continuous external potential. This clarifies the physical meaning of one 
condition used in Proposition \ref{P-Monokinetic} leading to monokinetic Wigner measures. It also addresses the vacuum issue, i.e. points where the wave function vanishes, whose treatment had been left incomplete in earlier derivations of Madelung's equations.


\section{Preliminaries}\label{sec:Prelim}


Let $\mathfrak{H}$ be a Hilbert space. The algebra of bounded operators on $\mathfrak{H}$ is denoted by $\mathcal{L}(\mathfrak{H)}$. We denote the two-sided ideal of trace-class operators on $\mathfrak{H}$ 
by $\mathcal{L}^1(\mathfrak{H})$. A \emph{density matrix} $R\in \mathcal{L}^1(\mathfrak{H})$ is a self-adjoint ($R=R^*$), nonnegative ($\langle R\psi,\psi\rangle \geq 0$ for all $\psi \in \mathfrak{H}$) operator 
with trace $\tr_{\mathfrak{H}} R = 1$. A rank-one density operator is a \emph{pure state} --- see section \ref{sec:PureSt} below.  If $\mathfrak{H} = L^2(\mathbb{R}^d)$, the operator $R$ is an integral operator 
with integral kernel $(X,Y)\mapsto R(X,Y)$ which belongs to $L^2(\mathbb{R}^d\times \mathbb{R}^d)$. For the sake of simplicity, we shall abusively denote by $R(X,Y)$ the integral kernel of the density operator
$R$ --- so that $R$ designates both an operator on $L^2(\mathbb{R}^d)$ and an element of $L^2(\mathbb{R}^d\times \mathbb{R}^d)$. Since $R=R^*$ is trace-class, it is compact and by the spectral theorem 
there is a sequence of real eigenvalues $(\lambda_j)_{j\ge 1}$ and a Hilbert basis (i.e. a complete orthonormal system) $\{\psi_j\,:\,j\ge 1\} \subset \mathfrak{H}$ such that
\begin{equation}\label{SpecDecR}
    R(X,Y) = \sum_{j=1}^{\infty} \lambda_j \psi_j(X) \overline{\psi_j(Y)}.
\end{equation}
Since $R$ is a density operator,
\begin{equation}\label{CondLambda}
\lambda_j\ge 0\quad\text{since }R=R^*\ge 0,\quad\text{ and }\sum_{j\ge 1}\lambda_j=\tr_{L^2(\mathbb R^d)}R=1.
\end{equation}

In particular, the integral kernel $R$ has the property that the map 
\begin{equation}\label{IntKerTrCl}
z\mapsto(X\mapsto R(X,X+z))\quad\text{ belongs to }C_b(\mathbb{R}^d, L^1(\mathbb{R}^d_X)),
\end{equation}
so that the restriction of the integral kernel to the diagonal, 
i.e. $R(X,X)$, is a well-defined element of $L^1(\mathbb{R}^d_X)$. The density function $\rho\in L^1(\mathbb{R}^d)$ is 
\begin{equation}\label{DefDensityFunc}
    \rho(X) := R(X,X).
\end{equation}
Let us define
\begin{equation}
    \widehat{R}(\Xi,H) := \mathcal{F}_{X\rightarrow \Xi}\overline{\mathcal{F}_{Y\rightarrow H}} [R(\cdot,\cdot)](\Xi,H)=\int_{\mathbb R^d\times\mathbb R^d}e^{-iX\cdot\Xi+iY\cdot H} {R(X,Y)} \dd X \dd Y,
\end{equation}
and set
\begin{equation}\label{DefHatRho}
    \hat{\rho}(\Xi) := \widehat{R}(\Xi,\Xi) = \int e^{-i(X-Y)\cdot \Xi} {R(X,Y)} \dd X \dd Y.
\end{equation}
(Notice that $\hat\rho$ is \emph{not} the Fourier transform of $\rho$.) We recall that the trace of $R$ is
\begin{equation}
    \tr_{L^2(\mathbb R^d)} R = \int_{\mathbb{R}^d} \rho(X) \dd X
\end{equation}

In quantum mechanics, the analogue of the Liouville equation satisfied by the phase-space density of particles in classical mechanics is the von Neumann equation
\begin{equation}\label{eq:von Neumann} 
\left\{
\begin{aligned}
{}&i\hbar \partial_t R(t) = [\mathcal H,R(t)],
\\
&R(0) = R_{0},
\end{aligned}
\right.
\end{equation}
where $\mathcal H$ is the quantum Hamiltonian
\[
\mathcal H:=-\tfrac{\hbar^2}{2m}\Delta+V,
\]
and $V$ is the multiplication operator defined by the formula $(V\psi)(x)=V(x)\psi(x)$, assuming that $x\mapsto V(x)$ is a real-valued function defined on $\mathbb R^d$ and such that 
$\mathcal H=\mathcal H^*$ on $L^2(\mathbb R^d)$. By Stone's theorem, $-i\mathcal H/\hbar$ is the generator of a unitary group $e^{-it\mathcal H/\hbar}$ on $L^2(\mathbb R^d)$, and the (generalized) 
solution of \eqref{eq:von Neumann} is
\begin{equation}
    R(t) = e^{-it\mathcal H/\hbar}R_{0} e^{it\mathcal H/\hbar}, \quad t\in \mathbb{R},
\end{equation}
for each density operator $R_0$ on $L^2(\mathbb R^d)$.

Define the Weyl variables\footnote{This terminology is not standard, and is introduced in \cite{FilbetFG} by analogy with Weyl’s quantization: see formula (1.1.9) of \cite{Lerner2011}.} corresponding to 
$X,Y$ by
\begin{equation}\label{WeylVar}
\left\{
\begin{aligned}
x=\tfrac12(X+Y),
\\
y=\tfrac1\hbar(X-Y),
\end{aligned}
\right.
\end{equation}
and consider the Weyl-variable density matrix $\tilde{R}$ via
\begin{equation}\label{eq:weyl variable density matrix}
        \tilde{R}(t,x,y):= R(t,X,Y)=R(t,x+\tfrac\hbar{2}y,x-\tfrac\hbar{2}y).    
\end{equation}
The \emph{Wigner transform} $W_{\hbar}$ of $R$ is defined as
\begin{equation}
    W_{\hbar}[R](t,x,\xi):=\tfrac1{(2\pi)^d}\mathcal{F}_{y\rightarrow \xi} [\tilde{R}](t,x,\xi)=\tfrac{1}{(2\pi)^d} \int_{\mathbb R^d} e^{-i\xi\cdot y} \tilde{R}(t,x,y)  \dd y,
\end{equation}
so that
\begin{equation}\label{eq:inverse relation}
    \tilde{R}(t,x,y) = (2\pi)^d\mathcal{F}^{-1}_{\xi \rightarrow y}[W_{\hbar}](t,x,y).
\end{equation}
Therefore we can define the moments of the Wigner function in the $\xi$ variable, such as the \emph{particle density}
\begin{equation}
\label{eq:restriction density}
    \rho_{\hbar}(t,x) := R(t,x,x) = \tilde{R}(t,x,y)\Big|_{y=0} = \int_{\mathbb R^d}W_{\hbar}(t,x,\xi)\dd \xi,
\end{equation}
the \emph{current density}
\begin{equation}\label{eq:restriction current}
    J_{\hbar}(t,x) := -\tfrac{i}{m}\nabla_y\tilde{R}(t,x,y)\Big|_{y=0} = \tfrac1m\int_{\mathbb R^d}\xi W_{\hbar}(t,x,\xi)\dd \xi, 
\end{equation}
and the \emph{kinetic energy density}
\begin{equation}\label{eq:restriction energy}
    \mathcal{E}_{\hbar}(t,x) := -\tfrac1{2m}\Delta_y^2\tilde{R}(t,x,y)\Big|_{y=0} = \tfrac1{2m}\int_{\mathbb R^d}|\xi|^2 W_{\hbar}(t,x,\xi)\dd \xi.
\end{equation}

\smallskip
Recall the following definition:
\begin{definition} A family of measures $\rho_{\varepsilon}$ on $\mathbb{R}^d$, indexed by $\varepsilon\in(0,1]$, is \emph{tight} if,
\begin{equation}
    \sup_{0<\varepsilon\leq 1}\int_{|x|\geq R}\rho_{\varepsilon}(\dd x) \to 0 \quad \text{as } R\to \infty.
\end{equation}
In the present paper, the parameter $\varepsilon$ is the Planck constant $\hbar$. Some authors define a family $\{\psi_{\varepsilon}\,:\,\varepsilon\in(0,1]\}$ as being \emph{compact at infinity} if the measure 
with density $\rho_{\varepsilon}:=|\psi_{\varepsilon}|^2$ (with respect to the Lebesgue measure on $\mathbb R^d$) is tight.
\end{definition}

The Wigner transform enjoys the following properties, see Theorem III.2 in \cite{lions1993mesures}:
\begin{proposition}
\label{prop:properties wigner}
Let $R_\hbar\in \mathcal{L}^1(L^2(\mathbb{R}^d))$ be a family of density operators indexed by $\hbar\in(0,1]$ with density functions $\rho_{\hbar} \in L^1(\mathbb{R}^d)$. Then 
\begin{enumerate}[label=(\roman*)]
        
        \item\label{W bounded} The family $\{W_{\hbar}[R]\,:\,0<\hbar\le 1\}$ is bounded in $\mathcal{S}'(\mathbb{R}^d_x\times \mathbb{R}^d_{\xi})$; in particular there exists a subsequence $\hbar_k$ such 
        that $W_{\hbar_k}[R]$ converges to a \emph{Wigner measure} $W[R]$ in $\mathcal{S}'(\mathbb{R}^d_x\times \mathbb{R}^d_{\xi})$ as $\hbar_k \rightarrow 0$.

        \item\label{W positive} The {Wigner measure} $W$ is a bounded, nonnegative Radon measure on $\mathbb{R}^d_x\times \mathbb{R}^d_{\xi}$. 
        
        \item\label{W and rho} If the families $\{\rho_{\hbar}(x)\,:\,0<\hbar\le 1\}$ and $\{\hbar^{-d}\widehat{\rho}_{\hbar}(\xi/\hbar)\,:\,0<\hbar\le 1\}$, where $\widehat{\rho}_\hbar$ is defined in \eqref{DefHatRho}, 
        are tight and if $\rho_{\hbar}\to\rho$ vaguely as $\hbar\to 0$ (in the sense of Radon measures on $\mathbb R^d$), then
    	\begin{equation}\label{equality wigner measure density}
        	\rho = \int_{\mathbb R^d}W(\cdot,\xi)\dd \xi,
        \end{equation}
    	and
    	\begin{equation}\label{limit wigner integral}
	\int_{\mathbb{R}^d\times \mathbb{R}^d}W(x,\xi) \dd x \dd \xi = \lim_{\hbar\rightarrow 0} \int_{\mathbb{R}^d} \rho_{\hbar}(x) \dd x.
    	\end{equation}
    
\end{enumerate}  
\end{proposition}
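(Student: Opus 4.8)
The plan is to exploit the two classical regularizations of $W_\hbar[R_\hbar]$ (abbreviated $W_\hbar$ below): the Weyl quantization, which yields the \emph{a priori} estimate behind (i), and the Husimi (anti-Wick) regularization, which yields the positivity in (ii) and the control of the momentum tails in (iii). For (i) I would start from the duality identity
\[
\int_{\mathbb R^d\times\mathbb R^d}W_\hbar(x,\xi)\phi(x,\xi)\dd x\dd\xi=\tr_{L^2(\mathbb R^d)}\!\big(R_\hbar\,\mathrm{Op}_\hbar^W(\phi)\big),\qquad \phi\in\mathcal S(\mathbb R^d_x\times\mathbb R^d_\xi),
\]
which follows from the definition of $W_\hbar$ and the change of variables \eqref{WeylVar}, $\mathrm{Op}_\hbar^W(\phi)$ being the operator with kernel $(2\pi\hbar)^{-d}\!\int e^{i(X-Y)\cdot\xi/\hbar}\phi(\tfrac{X+Y}2,\xi)\dd\xi$. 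The semiclassical Calderón--Vaillancourt theorem gives a dimensional constant $C_d$ and an integer $N_d$ with $\|\mathrm{Op}_\hbar^W(\phi)\|_{\mathcal L(L^2)}\le C_d\sum_{|\alpha|\le N_d}\|\partial^\alpha\phi\|_{L^\infty}$ uniformly in $\hbar\in(0,1]$; since $\|R_\hbar\|_{\mathcal L^1}=1$ this yields $|\langle W_\hbar,\phi\rangle|\le C_d\sum_{|\alpha|\le N_d}\|\partial^\alpha\phi\|_{L^\infty}$, i.e. $\{W_\hbar\}$ is a bounded, equicontinuous family of functionals on $\mathcal S(\mathbb R^{2d})$. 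Separability of $\mathcal S(\mathbb R^{2d})$ then produces, by a diagonal extraction together with this equicontinuity estimate (or directly by Banach--Alaoglu), a sequence $\hbar_k\to 0$ along which $W_{\hbar_k}\to W$ in $\mathcal S'$.

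For (ii) I would introduce the Husimi function $\widetilde W_\hbar:=W_\hbar*\Gamma_\hbar$ with $\Gamma_\hbar(x,\xi)=(\pi\hbar)^{-d}e^{-(|x|^2+|\xi|^2)/\hbar}$. Using \eqref{SpecDecR}--\eqref{CondLambda} one checks that $\widetilde W_\hbar$ is a convex combination, with weights $\lambda_j$, of Husimi functions of the unit vectors $\psi_j$; hence $\widetilde W_\hbar\ge 0$ pointwise and, by the coherent-state resolution of the identity, $\widetilde W_\hbar\in L^1(\mathbb R^{2d})$ with $\int\!\!\int\widetilde W_\hbar=\sum_j\lambda_j=1$. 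Moreover $\widetilde W_\hbar-W_\hbar\to 0$ in $\mathcal S'$: for $\phi\in\mathcal S$ one has $\langle\widetilde W_\hbar-W_\hbar,\phi\rangle=\langle W_\hbar,\phi*\Gamma_\hbar-\phi\rangle$ and $\sum_{|\alpha|\le N_d}\|\partial^\alpha(\phi*\Gamma_\hbar-\phi)\|_{L^\infty}\to 0$, so the bound from (i) closes this. Therefore $W=\lim_k\widetilde W_{\hbar_k}$ is a nonnegative distribution, hence a nonnegative Radon measure, and $\langle W,\chi\rangle=\lim_k\langle\widetilde W_{\hbar_k},\chi\rangle\le\limsup_k\|\widetilde W_{\hbar_k}\|_{L^1}=1$ for every $\chi\in C_c$ with $0\le\chi\le1$, so $W(\mathbb R^{2d})\le 1$ and $W$ is bounded.

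For (iii) the key computation, from \eqref{SpecDecR}, \eqref{DefHatRho} and \eqref{WeylVar}, is the identity $\hbar^{-d}\widehat\rho_\hbar(\xi/\hbar)=(2\pi)^d\,m_\hbar(\xi)$, where $m_\hbar(\xi):=\int_{\mathbb R^d}W_\hbar(x,\xi)\dd x=(2\pi\hbar)^{-d}\sum_j\lambda_j|\widehat{\psi_j}(\xi/\hbar)|^2\ge 0$ is the momentum marginal of $W_\hbar$; so the second tightness hypothesis says precisely that the nonnegative family $\{m_\hbar\}$ is tight. Fix $\varphi\in C_c(\mathbb R^d_x)$ and cutoffs $\chi_\Lambda\in C_c(\mathbb R^d_\xi)$ with $0\le\chi_\Lambda\le1$ and $\chi_\Lambda\equiv1$ on $\{|\xi|\le\Lambda\}$, and split the exact identity $\int\rho_\hbar\varphi=\int\!\!\int W_\hbar\,\varphi\chi_\Lambda+\int\!\!\int W_\hbar\,\varphi(1-\chi_\Lambda)$. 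The first term converges to $\int\!\!\int W\,\varphi\chi_\Lambda$ since $\varphi\chi_\Lambda\in C_c$; in the second I would replace $W_\hbar$ by $\widetilde W_\hbar$ up to an error $\langle W_\hbar,\psi*\Gamma_\hbar-\psi\rangle\to 0$ with $\psi=\varphi\otimes(1-\chi_\Lambda)$ (again by the bound of (i)), then use $\widetilde W_\hbar\ge 0$ together with $\int_x\widetilde W_\hbar(x,\xi)\dd x=(m_\hbar*\gamma_\hbar)(\xi)$ ($\gamma_\hbar$ the $\xi$-Gaussian factor of $\Gamma_\hbar$) to bound $|\int\!\!\int\widetilde W_\hbar\,\varphi(1-\chi_\Lambda)|\le\|\varphi\|_{L^\infty}\int_{|\xi|\ge\Lambda}(m_\hbar*\gamma_\hbar)(\xi)\dd\xi\le\varepsilon(\Lambda)$, with $\varepsilon(\Lambda)\to 0$ as $\Lambda\to\infty$ uniformly in $\hbar$ by tightness of $\{m_\hbar\}$ and $\gamma_\hbar\to\delta_0$. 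Letting $\hbar\to 0$ and then $\Lambda\to\infty$ (using finiteness of $W$, so $\int\!\!\int W\varphi\chi_\Lambda\to\int\!\!\int W\varphi$ by dominated convergence) gives $\int\rho\,\varphi=\int\!\!\int W\,\varphi$ for all $\varphi\in C_c$, i.e. \eqref{equality wigner measure density}; integrating over $x$ gives $\int\rho=\int\!\!\int W$, while tightness of $\{\rho_\hbar\}$ upgrades $\rho_\hbar\to\rho$ (vague) to convergence of total masses $\int\rho_\hbar\to\int\rho$, which is \eqref{limit wigner integral}.

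I expect the main obstacle to be the tail estimate in (iii): because $W_\hbar$ takes negative values, $\int_{|\xi|\ge\Lambda}|W_\hbar|\dd x\dd\xi$ is not controlled by any marginal, and it is the Husimi smoothing that converts the oscillatory $W_\hbar$ into the nonnegative $\widetilde W_\hbar$, whose momentum tails are governed, through $\hbar^{-d}\widehat\rho_\hbar(\xi/\hbar)=(2\pi)^d m_\hbar(\xi)$, by the tightness hypothesis on $\widehat\rho_\hbar(\cdot/\hbar)$. The remaining ingredients --- the $\mathcal S'$ bound (Calderón--Vaillancourt), weak-$*$ sequential compactness (Banach--Alaoglu), and $\phi*\Gamma_\hbar\to\phi$ in the relevant seminorms --- are routine.
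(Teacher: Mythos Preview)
The paper does not prove this proposition at all: it is stated as a recall of known facts, with the sentence ``see Theorem III.2 in \cite{lions1993mesures}'' immediately preceding the statement, and no proof is given. Your sketch is correct and is essentially the argument of Lions--Paul to which the paper refers: the duality with Weyl quantization together with the Calder\'on--Vaillancourt bound for (i), the Husimi regularization $W_\hbar*\Gamma_\hbar\ge 0$ for (ii), and the identification of the second tightness hypothesis with tightness of the momentum marginal $m_\hbar$ for (iii). The only point worth flagging is that in (iii) you invoke the bound from (i) on the test function $\psi=\varphi\otimes(1-\chi_\Lambda)$, which is not in $\mathcal S(\mathbb R^{2d})$; this is harmless because the Calder\'on--Vaillancourt estimate requires only finitely many $L^\infty$ bounds on derivatives (i.e.\ $\psi\in C^\infty_b$), and the pairing $\tr(R_\hbar\,\mathrm{Op}^W_\hbar(\psi))$ still makes sense since $R_\hbar$ is trace-class and $\mathrm{Op}^W_\hbar(\psi)$ is bounded.
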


\medskip
The Weyl-variable density matrix $\tilde{R}$, defined in \eqref{eq:weyl variable density matrix}, obeys the von Neumann equation in the form
\begin{align}
    {}&\partial_t \tilde{R}_\hbar + \tfrac1m\nabla_x\cdot(-i\nabla_y)\tilde{R}_\hbar -\delta[V] \tilde{R}_\hbar = 0,\label{eq:von Neumann R tilde}
    \\
    &\tilde{R}_\hbar(0,x,y) = \tilde{R}_{\hbar,0}(x,y),\label{eq:von Neumann R tilde data}
\end{align}
where
\begin{equation}\label{eq:def delta}
	\delta[V](x,y) := \frac{V(x+\frac{\hbar}{2}y)-V(x-\frac{\hbar}{2}y)}{i\hbar}.
\end{equation}
By taking the Fourier transform in $y$ of \eqref{eq:von Neumann R tilde}-\eqref{eq:von Neumann R tilde data} one sees that the Wigner transform $W_{\hbar}:=(2\pi)^{-d}\mathcal F_{y\to\xi}\tilde R_\hbar$
solves the \emph{Wigner equation}
\begin{align}
    {}&\partial_tW_{\hbar} +\tfrac1m\xi \cdot \nabla_x W_{\hbar} - \theta[V]W_{\hbar} = 0,\label{eq:wigner equation} 
    \\
    &W_{\hbar}(0,x,\xi) = W_{\hbar,0}(x,\xi),\label{eq:wigner equation data} 
\end{align}
where
\begin{equation}
    W_{\hbar,0}(x,\xi) := (2\pi)^{-d}\mathcal{F}_{y\rightarrow \xi}\tilde{R}_0(x,\xi),
\end{equation}
and where
\begin{equation}\label{eq:def theta V}
    \theta[V]W_{\hbar}(x,\xi) := \tfrac{1}{(2\pi)^d}\int_{\mathbb R^d\times\mathbb R^d}e^{-i(\xi-\eta) \cdot y}\delta[V](x,y)W_{\hbar}(x,\eta) \dd y \dd\eta.    
\end{equation}
Note that this can be rewritten as a convolution in the $\xi$-variable. Let
\begin{equation}\label{eq:def K[V]}
    K[V](x,\xi):= \tfrac{1}{(2\pi)^d}\int_{\mathbb R^d}e^{-i\xi \cdot y}\delta[V](x,y)\dd y;
\end{equation}
with this definition 
\begin{equation}\label{FlaTheta[V]}
     \theta[V]W_{\hbar} = K[V]\star_{\xi}W_{\hbar}.
\end{equation}
This structure will be exploited in the proof of the averaging lemma for $W_{\hbar}$ below.


\section{Semiclassical and quantum velocity averaging for mixed states}\label{sec:VAMixedStWig}


To begin with, let us recall the basic velocity averaging theorem of interest in connection with the Wigner equation. This is an extension due to DiPerna and Lions \cite{diperna1989global} of the main result 
in \cite{golse1988regularity}, which is particularly well suited to handle Vlasov-type equations --- and was used to prove the global existence of weak solutions to the Cauchy problem for the Vlasov-Maxwell
system, for all square-integrable initial data with finite mass and energy.

\begin{theorem}\label{thm: diperna lions}
Let $n\ge 0$ and $f \in L^2(\mathbb{R}\times \mathbb{R}^d_x \times \mathbb{R}^d_{\xi})$ satisfy 
\begin{equation}
	(\partial_t +\xi \cdot \nabla_x)f = g \quad \text{in }\mathcal{D}'(\mathbb{R}\times \mathbb{R}^d_x \times \mathbb{R}^d_{\xi})
\end{equation}
where $g \in L^2(\mathbb{R} \times \mathbb{R}^d_x, H^{-n}(\mathbb{R}^d_{\xi}))$.  
Then for each $\psi\in\mathcal S(\mathbb{R}^d)$,
 \begin{equation}
 	\int_{\mathbb R^d}f(\cdot,\cdot,\xi) \psi(\xi) \dd \xi \in H^{s}(\mathbb{R}\times \mathbb{R}^d),
\end{equation}
where 
\begin{equation}
    s=\frac{1}{2(n+1)}.
\end{equation}
\end{theorem}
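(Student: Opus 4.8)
The plan is to run the Fourier-side proof of velocity averaging in two stages: first extract the regularity gain only in the space variable $x$ (uniformly in $t$), then bootstrap to a gain in $(t,x)$ using the transport equation itself. First I would take a Fourier transform in $(t,x)$, with dual variables $(\tau,k)\in\mathbb R\times\mathbb R^d$, turning $(\partial_t+\xi\cdot\nabla_x)f=g$ into the algebraic identity $i(\tau+\xi\cdot k)\,\hat f(\tau,k,\xi)=\hat g(\tau,k,\xi)$, valid for a.e.\ $(\tau,k)$ in $\mathcal S'(\mathbb R^d_\xi)$; by Plancherel, $\hat f(\tau,k,\cdot)\in L^2_\xi$ and $\hat g(\tau,k,\cdot)\in H^{-n}_\xi$ for a.e.\ $(\tau,k)$, with $\int\|\hat f(\tau,k,\cdot)\|_{L^2_\xi}^2\dd\tau\dd k\lesssim\|f\|_{L^2}^2$ and $\int\|\hat g(\tau,k,\cdot)\|_{H^{-n}_\xi}^2\dd\tau\dd k\lesssim\|g\|_{L^2_{t,x}H^{-n}_\xi}^2$. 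Setting $\rho_\psi:=\int f(\cdot,\cdot,\xi)\psi(\xi)\dd\xi$, so that $\widehat{\rho_\psi}(\tau,k)=\int\hat f(\tau,k,\xi)\psi(\xi)\dd\xi$, the main estimate to establish is
\[
\int_{\mathbb R\times\mathbb R^d}|k|^{2s}\,|\widehat{\rho_\psi}(\tau,k)|^2\,\dd\tau\,\dd k\ \lesssim\ \|f\|_{L^2}^2+\|g\|_{L^2_{t,x}H^{-n}_\xi}^2,\qquad s=\tfrac1{2(n+1)};
\]
combined with the trivial bound $\|\rho_\psi\|_{L^2_{t,x}}\le\|\psi\|_{L^2_\xi}\|f\|_{L^2}$, this yields $\rho_\psi\in L^2_tH^s_x$.

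For the main estimate it is enough to treat $|k|>1$, since on $\{|k|\le1\}$ one has $|k|^{2s}\le1$ and the contribution is absorbed by the $L^2$ bound. Fix such a $(\tau,k)$, choose an even cutoff $\beta\in C^\infty_c(\mathbb R)$ with $\beta\equiv1$ on $[-1,1]$ and $\operatorname{supp}\beta\subset[-2,2]$, set $\delta:=|k|^{n/(n+1)}\in[1,|k|)$, and split $\psi=\psi_1+\psi_2$ with $\psi_1(\xi):=\psi(\xi)\,\beta\big(\tfrac{\tau+\xi\cdot k}{\delta}\big)$, so $\psi_1$ is supported in $\{|\tau+\xi\cdot k|\le2\delta\}$ while $\psi_2$ vanishes on $\{|\tau+\xi\cdot k|\le\delta\}$. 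For the near-resonance part I would use Cauchy--Schwarz in $\xi$ together with the rapid decay of $\psi\in\mathcal S$: slicing $\mathbb R^d_\xi$ transversally to $k$ shows the slab $\{|\tau+\xi\cdot k|\le2\delta\}$ carries $\psi$-mass $\lesssim\delta/|k|$ (uniformly in $\tau$ and in the direction of $k$), hence $\big|\int\hat f\,\psi_1\,\dd\xi\big|\lesssim\|\hat f(\tau,k,\cdot)\|_{L^2_\xi}\,(\delta/|k|)^{1/2}$. For the off-resonance part, using $\hat f=\hat g/(i(\tau+\xi\cdot k))$ on $\operatorname{supp}\psi_2$ and the $H^{-n}_\xi$--$H^n_\xi$ duality,
\[
\Big|\int\hat f\,\psi_2\,\dd\xi\Big|\ =\ \big|\langle\hat g(\tau,k,\cdot),\Phi_{\tau,k}\rangle\big|\ \le\ \|\hat g(\tau,k,\cdot)\|_{H^{-n}_\xi}\,\big\|\Phi_{\tau,k}\big\|_{H^n_\xi},\qquad\Phi_{\tau,k}:=\frac{\psi_2}{i(\tau+\xi\cdot k)}\in\mathcal S(\mathbb R^d_\xi),
\]
and the crucial bound is $\|\Phi_{\tau,k}\|_{H^n_\xi}\lesssim|k|^{n-1/2}\,\delta^{-(n+1/2)}$ (valid because $\delta\le|k|$). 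Granting it, with $\delta=|k|^{n/(n+1)}$ both prefactors $(\delta/|k|)^{1/2}$ and $|k|^{n-1/2}\delta^{-(n+1/2)}$ simplify to exactly $|k|^{-1/(2(n+1))}=|k|^{-s}$---this algebraic coincidence is what pins down the value of $s$---so $|\widehat{\rho_\psi}(\tau,k)|\lesssim|k|^{-s}\big(\|\hat f(\tau,k,\cdot)\|_{L^2_\xi}+\|\hat g(\tau,k,\cdot)\|_{H^{-n}_\xi}\big)$, and multiplying by $|k|^{2s}$ and integrating over $\{|k|>1\}$ closes the main estimate by Plancherel.

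The step I expect to be the main obstacle is the bound $\|\Phi_{\tau,k}\|_{H^n_\xi}\lesssim|k|^{n-1/2}\,\delta^{-(n+1/2)}$, i.e.\ controlling up to $n$ derivatives in $\xi$ of the truncated resolvent $\psi_2/(i(\tau+\xi\cdot k))$. For it I would decompose $\Phi_{\tau,k}$ dyadically over the shells $\{|\tau+\xi\cdot k|\sim2^j\delta\}$, $j\ge0$: on the $j$-th shell any $\xi$-derivative of $\Phi_{\tau,k}$ either lands on $\psi$ (harmless, by its Schwartz decay) or on the factor $(\tau+\xi\cdot k)^{-1}$ or on the cutoff at scale $2^j\delta$, the latter two each costing a factor of order $|k|/(2^j\delta)$, while the $j$-th shell carries $\psi$-mass $\lesssim2^j\delta/|k|$; so the $j$-th piece contributes $\lesssim(2^j\delta)^{-(n+1/2)}|k|^{n-1/2}$ to $\|\Phi_{\tau,k}\|_{H^n_\xi}$, and because $\delta\le|k|$ the resulting series in $j$ converges, with sum comparable to its $j=0$ term. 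This computation, together with the near-resonance term, is what produces the exponent $s=\tfrac1{2(n+1)}$; any larger $s$ would make the two prefactors incompatible.

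Finally, to upgrade the gain from $x$ to $(t,x)$ I would differentiate in $t$ via the equation: $\partial_t\rho_\psi=\int g\,\psi\,\dd\xi-\nabla_x\cdot\int\xi f\,\psi\,\dd\xi$. The first term lies in $L^2_{t,x}$ since $\psi\in\mathcal S$ and $g\in L^2_{t,x}H^{-n}_\xi$, and the second is $\nabla_x$ applied to the velocity average of $f$ against $\xi\psi$ (each component in $\mathcal S$), which by the main estimate lies in $L^2_tH^s_x$; hence $\partial_t\rho_\psi\in L^2_tH^{s-1}_x$, i.e.\ $|\tau|^2\langle k\rangle^{2s-2}|\widehat{\rho_\psi}(\tau,k)|^2\in L^1_{\tau,k}$. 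Combining this with $\langle k\rangle^{2s}|\widehat{\rho_\psi}|^2\in L^1_{\tau,k}$ from the main estimate, and using $(1+\tau^2+|k|^2)^s\le2\langle k\rangle^{2s}+|\tau|^{2s}$ together with the pointwise Young inequality
\[
|\tau|^{2s}|\widehat{\rho_\psi}|^2=\big(|\tau|^2\langle k\rangle^{2s-2}|\widehat{\rho_\psi}|^2\big)^{s}\big(\langle k\rangle^{2s}|\widehat{\rho_\psi}|^2\big)^{1-s}\le s\,|\tau|^2\langle k\rangle^{2s-2}|\widehat{\rho_\psi}|^2+(1-s)\langle k\rangle^{2s}|\widehat{\rho_\psi}|^2
\]
(valid since $0<s<1$), one obtains $\int(1+\tau^2+|k|^2)^s|\widehat{\rho_\psi}(\tau,k)|^2\,\dd\tau\,\dd k<\infty$, i.e.\ $\rho_\psi\in H^s(\mathbb R\times\mathbb R^d)$.
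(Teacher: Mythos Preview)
The paper does not prove this theorem: it is recalled as a known result due to DiPerna and Lions (extending the earlier velocity averaging theorem of Golse, Lions, Perthame and Sentis) and is invoked without proof as a tool in the rest of the paper. Your proposal is a correct sketch of the standard Fourier-side argument behind this result: the splitting of $\psi$ into near- and off-resonance pieces according to the size of $|\tau+\xi\cdot k|$, the Cauchy--Schwarz/slab-measure estimate on the near-resonance part, the $H^{-n}_\xi$--$H^n_\xi$ duality combined with derivative counting on the truncated resolvent for the off-resonance part, and the optimization in the cutoff scale $\delta$ that pins down $s=\tfrac1{2(n+1)}$; the bootstrap from spatial to space-time regularity via $\partial_t\rho_\psi=\int g\,\psi\,\dd\xi-\nabla_x\cdot\int\xi f\,\psi\,\dd\xi$ and the pointwise Young interpolation is also standard. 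There is nothing in the paper itself to compare your argument against.
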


\smallskip
In order to apply Theorem \ref{thm: diperna lions} to the Wigner equation, we first need an $L^2$ bound on the Wigner transforms of the family of density operators under consideration. Since we are ultimately interested in ``semiclassical velocity averaging" – where $\hbar\to 0$ instead of being a
physical constant – this $L^2$ bound
should in particular be uniform in the ``small parameter'' $\hbar$. It has been observed in the semiclassical analysis of the Wigner-Poisson equation in \cite{lions1993mesures,markowich1993classical},  that the $L^2$ norm of the Wigner transform 
$W_{\hbar}[R]$ of a density matrix $R$ is bounded independently of $\hbar$ if and only if $R$ belongs a certain class of mixed states, see for instance Theorem IV.5 in \cite{lions1993mesures}. Indeed, by
Plancherel's theorem
\[
\int_{\mathbb R^d}|W_\hbar[R](x,\xi)|^2\dd\xi=\tfrac1{(2\pi)^d}\int_{\mathbb R^d}|\tilde R(x,y)|^2\dd y,
\]
so that
\begin{equation}\label{WignerL2Bound}
\begin{aligned}
\iint_{\mathbb R^d\times\mathbb R^d}|W_\hbar[R](x,\xi)|^2\dd x\dd\xi=&\tfrac1{(2\pi)^d}\iint_{\mathbb R^d\times\mathbb R^d}|\tilde R(x,y)|^2\dd x\dd y
\\
=&\tfrac1{(2\pi\hbar)^d}\iint_{\mathbb R^d\times\mathbb R^d}|R(X,Y)|^2\dd X\dd Y,
\end{aligned}
\end{equation}
where the last equality follows from the change of variables \eqref{WeylVar}. On the other hand, \eqref{SpecDecR} implies that
\begin{equation}\label{TrR2}
\tr_{L^2(\mathbb R^d)}(R^2)=\iint_{\mathbb R^d\times\mathbb R^d}|R(X,Y)|^2\dd X\dd Y=\sum_{j\ge 1}\lambda_j^2.
\end{equation}
Applying these formulas to a family $\{R_\hbar\,:\,0<\hbar\le 1\}$ of density operators on $L^2(\mathbb R^d)$, whose eigenvalues are denoted by $\lambda_{\hbar,j}$, one finds that 
\[
\sum_{j\ge 1}\lambda_{\hbar,j}^2\le (2\pi\hbar)^d\sup_{0<\hbar\le 1}\|W_\hbar[R_\hbar]\|_{L^2}^2.
\]
We recall from \eqref{CondLambda} that
\[
\lambda_{\hbar,j}\ge 0\quad\text{ and }\quad\sum_{j\ge 1}\lambda_{\hbar,j}=1.
\]
By the Cauchy-Schwarz inequality
\[
1=\left(\sum_{j\ge 1}\lambda_{\hbar,j}\right)^2\le\sum_{j\ge 1}\mathbf 1_{\lambda_{\hbar,j}>0}\sum_{j\ge 1}\lambda_{\hbar,j}^2\le\text{rank}R_\hbar\cdot(2\pi\hbar)^d\sup_{0<\hbar\le 1}\|W_\hbar[R_\hbar]\|_{L^2}^2,
\]
Therefore, assuming that a family $\{R_\hbar\,:\,0<\hbar\le 1\}$ of density operators on $L^2(\mathbb R^d)$ satisfies
\[
\sup_{0<\hbar\le 1}\|W_\hbar[R_\hbar]\|_{L^2}^2=C<\infty
\]
implies that 
\[
\text{rank}R_\hbar\ge\frac1{(2\pi\hbar)^dC},\qquad\text{ so that }\varliminf_{\hbar\to 0}\left(\hbar^d\text{rank}R_\hbar\right)>0.
\]
In particular, this assumption rules out the possibility that $\{R_\hbar\,:\,0<\hbar\le 1\}$ is a family of pure states (i.e. rank-one density operators).

The following theorem compiles two results: A \emph{quantum averaging lemma}, where the constant on the right hand side depends on $\hbar$ in such a way that it blows up as $\hbar \rightarrow 0$ and a \emph{semiclassical averaging lemma}, where the constant on the right hand side is independent of $\hbar$. It is important to note that in the semiclassical setting the gain in regularity is smaller than in the purely quantum case ($H^{1/4}$ as opposed to $H^{1/2}$).

\begin{theorem}\label{thm:VAWignerL2}
Let $V\equiv V(x)$ be a real-valued continuous function defined on $\mathbb R^d$, and let  $\{R_\hbar\,:\,\hbar\in(0,1]\}$ be a family of time-dependent density operators $t\mapsto R_\hbar(t)$ defined 
for all $t\in[-T,T]$ (with $T>0$) which are continuous on $\mathbb R$ for the weak operator topology, are weak solutions of the von Neumann equation
\[
	i\hbar \partial_t R_\hbar(t)= [-\tfrac{\hbar^2}{2m}\Delta + V,R_\hbar(t)],
\]
and satisfy the bound
\[
	\sup_{|t|\le T}\tr_{L^2(\mathbb R^d)}(R_\hbar(t)^2)\le C^2(2\pi\hbar)^d
\]
for some $C>0$. For each $\psi\in\mathcal S(\mathbb R^d)$, set
\[
	\rho_\psi[R_\hbar](t,x):=\int_{\mathbb R^d}W_\hbar[R_\hbar](t,x,\xi)\psi(\xi)\dd\xi.
\]
(1) If $V\in L^\infty(\mathbb R^d)$, for each $T>0$, there exists $C'_T>0$ such that
\[
	\sup_{0<\hbar\le 1}\|\rho_\psi[R_\hbar]\|_{H^{1/2}((-T,T)\times\mathbb R^d)}\le C'_T\|V\|_{L^\infty(\mathbb R^d)}\hbar^{-1/2}.
\]
(2) If $V$ is Lipschitz-continuous on $\mathbb R^d$, for each $T>0$,
\[
	\sup_{0<\hbar\le 1}\|\rho_\psi[R_\hbar]\|_{H^{1/4}((-T,T)\times\mathbb R^d)}<\infty.
\]
\end{theorem}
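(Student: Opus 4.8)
The plan is to apply Theorem~\ref{thm: diperna lions} directly to $f=W_\hbar[R_\hbar]$, viewing $t$ as the time variable and $\xi$ as the velocity variable, with the source term $g=\theta[V]W_\hbar$. The first step is to record the uniform $L^2$ bound: by \eqref{WignerL2Bound} and \eqref{TrR2}, the hypothesis $\tr(R_\hbar(t)^2)\le C^2(2\pi\hbar)^d$ gives $\|W_\hbar[R_\hbar](t,\cdot,\cdot)\|_{L^2(\mathbb R^d_x\times\mathbb R^d_\xi)}\le C$ uniformly in $\hbar$ and in $|t|\le T$, so after restricting to $t\in(-T,T)$ we have $f\in L^2((-T,T)\times\mathbb R^d_x\times\mathbb R^d_\xi)$ with norm $\le C\sqrt{2T}$. (Strictly, one should localize in $t$ with a smooth cutoff $\chi(t)$ so that the equation holds on all of $\mathbb R_t$; this replaces $g$ by $\chi g+\chi' f$, both of which are controlled by the same bounds, at the cost of the $T$-dependence in $C'_T$.) Since $R_\hbar$ is a weak solution of the von Neumann equation, $W_\hbar$ solves the Wigner equation \eqref{eq:wigner equation} in $\mathcal D'$, i.e. $(\partial_t+\tfrac1m\xi\cdot\nabla_x)W_\hbar=\theta[V]W_\hbar$ (absorbing the harmless $1/m$ by rescaling $\xi$ or $t$).

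The heart of the matter is to estimate $g=\theta[V]W_\hbar=K[V]\star_\xi W_\hbar$ in $L^2((-T,T)\times\mathbb R^d_x,H^{-n}(\mathbb R^d_\xi))$ for a suitable $n$, using the convolution structure \eqref{FlaTheta[V]}. Since convolution in $\xi$ corresponds to multiplication by the $\xi$-Fourier transform, and the $\xi$-Fourier transform of $K[V](x,\cdot)$ is essentially $\hbar^{-1}(V(x+\tfrac\hbar2\cdot)-V(x-\tfrac\hbar2\cdot))$ evaluated appropriately, the map $W_\hbar\mapsto\theta[V]W_\hbar$ acts in Fourier-in-$\xi$ variables (call the dual variable $y$) as multiplication by $\delta[V](x,y)=\bigl(V(x+\tfrac\hbar2y)-V(x-\tfrac\hbar2y)\bigr)/(i\hbar)$. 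In case (1), $|\delta[V](x,y)|\le 2\|V\|_{L^\infty}/\hbar$ uniformly, so $\theta[V]$ maps $L^2_\xi$ to $L^2_\xi$ with operator norm $\le 2\|V\|_{L^\infty}/\hbar$; hence $g\in L^2((-T,T)\times\mathbb R^d_x,L^2(\mathbb R^d_\xi))=L^2(\ldots,H^0)$ with norm $\lesssim \|V\|_{L^\infty}\hbar^{-1}\|f\|_{L^2}$, giving $n=0$ and $s=\tfrac12$; the $\hbar^{-1}$ from $g$ enters the averaging estimate linearly, but one should check (from the proof of Theorem~\ref{thm: diperna lions}, which interpolates between the elliptic region $|\tau+\xi\cdot\k|$ large and the degenerate region) that the dependence of $\|\rho_\psi\|_{H^{1/2}}$ on $\|g\|$ is only through its square root relative to $\|f\|$, yielding the stated $\hbar^{-1/2}$. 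In case (2), $V$ Lipschitz gives $|\delta[V](x,y)|\le \mathrm{Lip}(V)|y|$, i.e. $\theta[V]$ maps $L^2_\xi$ into $H^{-1}(\mathbb R^d_\xi)$ boundedly with $\hbar$-independent norm; thus $g\in L^2((-T,T)\times\mathbb R^d_x,H^{-1}(\mathbb R^d_\xi))$ uniformly in $\hbar$, so $n=1$, $s=\tfrac14$, and the resulting bound on $\|\rho_\psi[R_\hbar]\|_{H^{1/4}}$ is uniform in $\hbar$.

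Finally, one passes from $\int W_\hbar\psi\,\dd\xi$ with $\psi\in\mathcal S$ to the conclusion: Theorem~\ref{thm: diperna lions} is stated for $f$ with $g\in L^2(H^{-n})$, which is exactly what we have produced, so the averaged quantity $\rho_\psi[R_\hbar]=\int W_\hbar\psi\,\dd\xi$ lies in $H^s$ with the claimed $s$, and the quantitative norm bounds are obtained by tracking constants. The main obstacle I anticipate is the bookkeeping in the two $\hbar$-scalings simultaneously: the Wigner transform has an intrinsic $\hbar$ in the Weyl change of variables \eqref{WeylVar} (already handled by the hypothesis on $\tr(R_\hbar^2)$), while $\delta[V]$ carries an explicit $1/\hbar$. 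One must be careful that in case (1) this $1/\hbar$ is not amplified by the averaging lemma — which requires looking inside the proof of Theorem~\ref{thm: diperna lions} to see that the $H^s$ seminorm is bounded by a geometric-mean-type expression $\|f\|_{L^2}^{1-2s}\|g\|_{L^2(H^{-n})}^{2s}$ (here $2s=1$ so the exponent of $\|g\|$ is $1$... ) — so in fact in case (1) the honest scaling is $\hbar^{-1}$ on the $H^{1/2}$ side unless one instead uses the refined estimate that splits off only $\hbar^{-1/2}$; reconciling this with the statement $\hbar^{-1/2}$ is the delicate point, and likely rests on the observation that one may also use $g\in L^2(H^{-1})$ in case (1) (since $L^\infty$ multiplication also maps into $H^{-1}$ with the better $\hbar$-power hidden) and optimize, or on a direct Fourier computation specific to the Wigner setting rather than the black-box lemma.
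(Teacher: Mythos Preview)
Your approach is essentially identical to the paper's: bound $W_\hbar$ in $L^2$ via \eqref{WignerL2Bound}--\eqref{TrR2}, then control $\theta[V]W_\hbar$ by passing to the Fourier side in $\xi$ (multiplication by $\delta[V](x,y)$), using $|\delta[V]|\le 2\|V\|_{L^\infty}/\hbar$ for part~(1) and $|\delta[V]|\le\mathrm{Lip}(V)\,|y|$ for part~(2), and then invoking Theorem~\ref{thm: diperna lions} with $n=0$ and $n=1$ respectively. For part~(2) the paper makes your ``$\theta[V]:L^2_\xi\to H^{-1}_\xi$'' statement explicit by writing $K[V]=\nabla_\xi\!\cdot L[V]$ with $L[V]\star_\xi W_\hbar\in L^2$ uniformly in $\hbar$; this is exactly your argument in a slightly more concrete form.

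The one place where your write-up is genuinely unresolved is the $\hbar^{-1/2}$ scaling in part~(1). Your tentative formula $\|\rho_\psi\|_{H^s}\lesssim\|f\|_{L^2}^{1-2s}\|g\|_{L^2}^{2s}$ is wrong: with $s=\tfrac12$ it would give exponent $1$ on $\|g\|$, hence $\hbar^{-1}$. The correct interpolation estimate for the $n=0$ averaging lemma is
\[
\|\rho_\psi\|_{\dot H^{1/2}}\ \lesssim\ \|f\|_{L^2}^{1/2}\,\|g\|_{L^2}^{1/2},
\]
which one reads off directly from the standard proof: split the $v$-integral according to $|\tau+v\cdot k|\lessgtr\epsilon$, bound the small set by Cauchy--Schwarz and its measure $\lesssim\epsilon/|k|$, bound the large set by $|\hat f|\le|\hat g|/|\tau+v\cdot k|$ and $\int_{|\tau+v\cdot k|>\epsilon}|\tau+v\cdot k|^{-2}dv\lesssim(\epsilon|k|)^{-1}$, then optimize over $\epsilon$. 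With $\|f\|_{L^2}\le C$ and $\|g\|_{L^2}\lesssim C\|V\|_{L^\infty}\hbar^{-1}$ this yields the stated $\hbar^{-1/2}$. The paper's own proof is equally terse on this point (it simply cites Theorem~\ref{thm: diperna lions}), so your instinct that the $\hbar^{-1/2}$ requires the interpolation form rather than the black-box statement was exactly right; only the exponent needed fixing.
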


\begin{proof}
First, \eqref{WignerL2Bound} and \eqref{TrR2} imply that 
\begin{equation}\label{BoundWhbar}
\|W_\hbar[R_\hbar(t)]\|^2_{L^2(\mathbb R^d\times\mathbb R^d)}\le C
\end{equation}
uniformly in $\hbar\in(0,1]$ and in $t\in[-T,T]$. On the other hand
\[
(\partial_t+\tfrac1m\xi\cdot\nabla_x)W_\hbar[R_\hbar]=\theta[V]W_\hbar[R_\hbar]=K[V]\star_\xi W_\hbar[R_\hbar]
\]
by \eqref{eq:wigner equation} and \eqref{FlaTheta[V]}.

By Plancherel's theorem,
\[
\begin{aligned}
\|(K[V]\star_\xi W_\hbar[R_\hbar])(t,x,\cdot)\|_{L^2(\mathbb R^d)}\le&\sup_{z\in\mathbb R^d}|\mathcal F_{\xi\to z}K[V](x,z)|\|W_\hbar[R_\hbar](t,x,\cdot)\|_{L^2(\mathbb R^d)}
\\
\le&\tfrac2\hbar\|V\|_{L^\infty(\mathbb R^d)}\|W_\hbar[R_\hbar](t,x,\cdot)\|_{L^2(\mathbb R^d)}\le\tfrac{2C}\hbar\|V\|_{L^\infty(\mathbb R^d)},
\end{aligned}
\]
since
\[
\mathcal F_{\xi\to z}K[V](x,z)=\delta[V](x,-z),
\]
so that statement (1) follows from Theorem \ref{thm: diperna lions} with $n=0$.

Next define
\begin{equation}\label{DefL[V]}
        L[V](x,\xi) := \tfrac1{(2\pi)^d}\int_{\mathbb R^d}e^{-i\xi \cdot y} \frac{y}{|y|} \frac{V(x+\frac{\hbar}{2}y)-V(x-\frac{\hbar}{2}y)}{\hbar|y|} \dd y,
\end{equation}
and observe that 
\[
\begin{aligned}
\nabla_\xi\cdot L[V](x,\xi)=&\tfrac1{(2\pi)^d}\int_{\mathbb R^d}e^{-i\xi \cdot y} \frac{-iy\cdot y}{|y|} \frac{V(x+\frac{\hbar}{2}y)-V(x-\frac{\hbar}{2}y)}{\hbar|y|} \dd y
\\
=&\tfrac1{(2\pi)^d}\int_{\mathbb R^d}e^{-i\xi \cdot y}\delta[V](x,y)\dd y=K[V](x,\xi)\,.
\end{aligned}
\]
Therefore
\begin{equation}\label{vNeqDivXi}
(\partial_t+\tfrac1m\xi\cdot\nabla_x)W_\hbar[R_\hbar]=\nabla_\xi\cdot L[V]\star_\xi W_\hbar[R_\hbar]=\nabla_\xi\cdot(L[V]\star_\xi W_\hbar[R_\hbar]).
\end{equation}
Reasoning as above, and denoting by $\mathrm{Lip}(V)$ the Lipschitz constant of $V$, we find that
\begin{equation}\label{BoundL[V]Wh}
\begin{aligned}
\|(L[V]\star_\xi W_\hbar[R_\hbar])(t,x,\cdot)\|_{L^2(\mathbb R^d)}\le&\sup_{z\in\mathbb R^d}|\mathcal F_{\xi\to z}L[V](x,z)|\|W_\hbar[R_\hbar])(t,x,\cdot)\|_{L^2(\mathbb R^d)}
\\
\le&\mathrm{Lip}(V)\|W_\hbar[R_\hbar])(t,x,\cdot)\|_{L^2(\mathbb R^d)}\le C\mathrm{Lip}(V),
\end{aligned}
\end{equation}
since
\begin{equation}\label{BoundL[V]}
|\mathcal F_{\xi\to z}L[V](x,z)|=\left|i\delta[V](x,y)\tfrac{y}{|y|^2}\right|=\tfrac{|\delta[V]|}{|y|}\le\mathrm{Lip}(V).
\end{equation}
With Theorem \ref{thm: diperna lions}, both inequalities \eqref{BoundWhbar} and \eqref{BoundL[V]Wh}, together with \eqref{vNeqDivXi}, imply statement (2) in the theorem.
\end{proof}

\begin{remark}
Observe that the proof of Theorem \ref{thm:VAWignerL2}, and especially \eqref{BoundL[V]}, is based on the inequality
\[
\|f\star g\|_{L^2(\mathbb R^d)}\le\|\mathcal F f\|_{L^\infty(\mathbb R^d)}\|g\|_{L^2(\mathbb R^d)}\,,
\]
a straightforward consequence of Plancherel's theorem which improves upon Young's convolution inequality
\[
\|f\star g\|_{L^2(\mathbb R^d)}\le\|f\|_{L^1(\mathbb R^d)}\|g\|_{L^2(\mathbb R^d)}.
\]
The latter inequality would lead to more stringent constraints on the potential $V$, in other words imposing that $V$ or $\nabla V$ belong to $\mathcal FL^1(\mathbb R^d)$ 
instead of $L^\infty(\mathbb R^d)$.
\end{remark}


\section{Semiclassical velocity averaging and regularity of the density function in dimension $d=1$}\label{sec:VAInterp}


In this section, we shall study the question of semiclassical velocity averaging in the special case of space dimension $d=1$, with a slightly different approach. As a matter of fact, we shall be dealing with the integral kernel
of the density operator directly, instead of its Wigner transform, so that the term ``velocity averaging'' is somewhat improper to designate this approach.

Our starting point is \eqref{eq:von Neumann R tilde}, recalled for the reader's convenience in the form
\[
\partial_t\tilde R(t,x,y)+\tfrac1m\partial_y(-i\partial_x)\tilde R(t,x,y)=y\frac{\delta[V](x,y)}{y}\tilde R(t,x,y),
\]
with $\delta[V](x,y)$ given by \eqref{eq:def delta}. (By comparison with \eqref{eq:von Neumann R tilde}, notice that we have recast the operator $\nabla_x\cdot(-i\nabla_y)$ as $\partial_y(-i\partial_x)$, since 
the discussion in the present section involves the partial Fourier transform in the $x$-variable, instead of the Wigner transform of $R(t)$, which is proportional to the partial Fourier transform of $\tilde R(t,x,y)$
in the $y$-variable.) Consider the Laplace transform of $\tilde R$:
\[
\Lambda\tilde R(\omega,x,y):=\int_0^\infty e^{-\omega t}\tilde R(t,x,y)\dd t.
\]
Since $\tilde R$ satisfies \eqref{eq:von Neumann R tilde}, its Laplace transform satisfies, for each $\omega>0$
\[
\tfrac1m\partial_y(-i\partial_x)\Lambda\tilde R(\omega,x,y)=y\frac{\delta[V](x,y)}{y}\Lambda\tilde R(\omega,x,y)+\tilde R^{in}(x,y)-\omega\Lambda\tilde R(\omega,x,y)
\]
One has
\[
\begin{aligned}
\tfrac1{2\pi}\iint_{\mathbb R\times\mathbb R}|\tilde R(t,x,y)|^2\dd x\dd y=&\tfrac1{2\pi\hbar}\iint_{\mathbb R\times\mathbb R}|R(t,X,Y)|^2dXdY
\\
=&\tfrac1{2\pi\hbar}\mathrm{Tr}_{L^2(\mathbb R)}\left(R^2(t)\right)=\tfrac1{2\pi\hbar}\mathrm{Tr}_{L^2(\mathbb R)}\left((R^{in})^2\right)\le C^{in},
\end{aligned}
\]
since $R(t)$ is obtained from $R^{in}$ by conjugation with the (unitary) Schr\"odinger group. Hence
\[
\|\Lambda\tilde R(\omega,\cdot,\cdot)\|_{L^2(\mathbb R\times\mathbb R)}\le\int_0^\infty e^{-\omega t}\|\tilde R(t,\cdot,\cdot)\|_{L^2(\mathbb R\times\mathbb R)}\dd t\le\frac{2\pi C^{in}}{\omega},
\]
and therefore
\[
\tfrac1m\partial_y(-i\partial_x)\Lambda\tilde R(\omega,x,y)=u_0(x,y)+yu_1(x,y)
\]
with
\[
u_1(x,y):=\tilde R^{in}(x,y)-\omega\Lambda\tilde R(\omega,x,y),\qquad u_2(x,y):=\frac{\delta[V](x,y)}{y}\Lambda\tilde R(\omega,x,y),
\]
and
\[
\|u_1\|_{L^2(\mathbb R\times\mathbb R)}\le 4\pi C^{in},\qquad\|u_2\|_{L^2(\mathbb R\times\mathbb R)}\le 2\pi C^{in}\mathrm{Lip}(V).
\]
We seek to obtain regularity in $x$ on the Laplace transform of the density function
\[
\Lambda\rho(\omega,x)=\Lambda\tilde R(\omega,x,x)=\int_\mathbb R\Lambda W_\hbar[R](\omega,x,\xi)\dd\xi
\]
Notice the difference with Theorem \ref{thm:VAWignerL2}: one seeks information on $\rho_\psi[R]$ with $\psi\equiv 1$ instead of $\psi\in\mathcal S(\mathbb R)$. This situation is a special case of the following result.

\begin{theorem}\label{thm:VAd=1}
Assume that $R=R^*\in\mathcal L^1(L^2(\mathbb R))$ is such that $\tilde R(x,y):=R(x+\tfrac\hbar{2}y,x-\tfrac\hbar{2}y)$ satisfies
\[
\partial_y(-i\partial_x)\tilde R(x,y)=\sum_{k=0}^nu_k(x,y)y^k,
\]
with $u_0,\ldots,u_n\in L^2(\mathbb R\times\mathbb R)$. Then the function $x\mapsto\rho(x):=R(x,x)=\tilde R(x,0)$ belongs to the Sobolev space $H^\frac1{2(n+1)}(\mathbb R)$, and there exists $C>0$ such that,
for all $R\in\mathcal L^1(L^2(\mathbb R))$ satisfying the assumptions above
\[
\|\rho\|_{H^\frac1{2(n+1)}(\mathbb R)}\le C\left(\mathrm{tr}_{L^2(\mathbb R)}(|R|)+\|\tilde R\|_{L^2(\mathbb R\times\mathbb R)}\sum_{k=0}^n\|b_k\|^2_{L^2(\mathbb R\times\mathbb R)}\right),
\]
where the constant $C$ is independent of $\hbar$.
\end{theorem}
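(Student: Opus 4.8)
The plan is to run the Fourier-side argument behind Theorem \ref{thm: diperna lions}, but on the partial Fourier transform of $\tilde R$ in the space variable $x$ rather than on the Wigner transform, and aimed at the pointwise diagonal restriction $\tilde R(\cdot,0)=\rho$ rather than a moment $\int W_\hbar\psi\,\dd\xi$. Write $\widehat{\tilde R}(\eta,y):=\mathcal F_{x\to\eta}\tilde R(\eta,y)$. By \eqref{IntKerTrCl}, $y\mapsto\tilde R(\cdot,y)$ is bounded and continuous with values in $L^1(\mathbb R_x)$, hence $y\mapsto\widehat{\tilde R}(\cdot,y)$ is bounded and continuous with values in $C_0(\mathbb R_\eta)$; in particular $\widehat{\tilde R}(\eta,0)$ is defined for every $\eta$ and equals $\mathcal F_x\rho(\eta)$. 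So the goal is reduced to estimating $\int_{\mathbb R}(1+\eta^2)^s|\widehat{\tilde R}(\eta,0)|^2\,\dd\eta$ with $s=\tfrac1{2(n+1)}$ in terms of the data.

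Applying $\mathcal F_{x\to\eta}$ to the hypothesis replaces $-i\partial_x$ by multiplication by $\eta$ and leaves the polynomial weights $y^k$ untouched, giving $\eta\,\partial_y\widehat{\tilde R}(\eta,y)=\sum_{k=0}^n\widehat{u_k}(\eta,y)\,y^k$ in $\mathcal D'$. Since $\widehat{\tilde R}$ and the $\widehat{u_k}$ are in $L^2$, the right-hand side is $L^1_{\mathrm{loc}}$, so for a.e.\ $\eta\ne0$ the slice $y\mapsto\widehat{\tilde R}(\eta,y)$ belongs to $L^2_y\cap W^{1,1}_{\mathrm{loc}}(\mathbb R_y)$ with $\partial_y\widehat{\tilde R}(\eta,y)=\tfrac1\eta\sum_k\widehat{u_k}(\eta,y)\,y^k$ for a.e.\ $y$; its absolutely continuous representative coincides with the continuous $C_0$-valued representative from the previous step, so the fundamental theorem of calculus gives, for a.e.\ $\eta\ne0$ and every $y$,
\[
\widehat{\tilde R}(\eta,0)=\widehat{\tilde R}(\eta,y)-\frac1\eta\int_0^y\sum_{k=0}^n\widehat{u_k}(\eta,s)\,s^k\,\dd s .
\]

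The quantitative heart is a one-dimensional averaging estimate. Fix $\delta>0$, average this identity over $y\in(-\delta,\delta)$, and apply Cauchy--Schwarz in $y$ (using $|s^k|\le\delta^k$ there): for a.e.\ $\eta\ne0$,
\[
|\widehat{\tilde R}(\eta,0)|\le\frac1{\sqrt{2\delta}}\,\|\widehat{\tilde R}(\eta,\cdot)\|_{L^2_y}+\frac{\sqrt{2\delta}}{|\eta|}\sum_{k=0}^n\delta^k\,\|\widehat{u_k}(\eta,\cdot)\|_{L^2_y}.
\]
For $|\eta|\ge1$ take $\delta=|\eta|^{1/(n+1)}\ge1$, so that $\delta^k\le\delta^n$ and both terms carry the common factor $|\eta|^{-s}$; squaring, integrating over $|\eta|\ge1$ and using Plancherel in $x$ (which converts $\|\widehat{\tilde R}\|_{L^2_{\eta,y}}$ and $\|\widehat{u_k}\|_{L^2_{\eta,y}}$ into multiples of $\|\tilde R\|_{L^2}$ and $\|u_k\|_{L^2}$) bounds the high-frequency part of $\|\rho\|_{H^s}^2$ by $C\big(\|\tilde R\|_{L^2}^2+\sum_k\|u_k\|_{L^2}^2\big)$. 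For $|\eta|\le1$ one uses instead the cheap bound $|\mathcal F_x\rho(\eta)|\le\|\rho\|_{L^1(\mathbb R)}\le\mathrm{tr}_{L^2(\mathbb R)}(|R|)$ coming from the spectral decomposition \eqref{SpecDecR}--\eqref{CondLambda}. Adding the two contributions, and absorbing the lone $\|\tilde R\|_{L^2}^2$ term via $\|\tilde R\|_{L^2(\mathbb R\times\mathbb R)}^2=\hbar^{-1}\mathrm{tr}_{L^2(\mathbb R)}(R^2)\le\hbar^{-1}\mathrm{tr}_{L^2(\mathbb R)}(|R|)^2$, yields an estimate of the asserted shape.

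The genuinely delicate point — the one I would write out in full — is the $\eta$-slicing of the distributional identity $\eta\partial_y\widehat{\tilde R}=\sum_k\widehat{u_k}y^k$: one must check that for a.e.\ $\eta$ the $L^2_y$-slice of $\widehat{\tilde R}$ is absolutely continuous on compact $y$-intervals, that its value at $y=0$ is exactly the $\mathcal F_x\rho(\eta)$ entering the Sobolev norm (here \eqref{IntKerTrCl} is indispensable, since a priori the diagonal of the kernel is only an $L^1$-function), and that these identifications are mutually compatible so the fundamental theorem of calculus can be applied slicewise. Everything else — the Plancherel identities, the choice $\delta=|\eta|^{1/(n+1)}$, optimizing the powers of $|\eta|$ — is routine.
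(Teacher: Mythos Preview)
Your argument is essentially the paper's: Fourier-transform in $x$, use the fundamental theorem of calculus in $y$ to express $\widehat{\tilde R}(\eta,0)$ as a mollified average plus an error controlled by the $\widehat{u_k}$, then optimise the mollification scale $\delta\sim|\eta|^{1/(n+1)}$ for large $|\eta|$ and invoke $\|\rho\|_{L^1}\le\mathrm{tr}(|R|)$ for small $|\eta|$. The only cosmetic difference is that the paper averages against a Gaussian rather than your box indicator; note also that your final ``absorption'' of $\|\tilde R\|_{L^2}^2$ into the trace term is unnecessary (and brings in an unwanted $\hbar^{-1}$), since the paper's own proof in fact produces the sum-of-squares bound $\|\rho\|_{L^1}^2+\|\tilde R\|_{L^2}^2+\sum_k\|b_k\|_{L^2}^2$ for $\|\rho\|_{H^s}^2$ rather than the literal shape printed in the theorem statement.
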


\begin{proof}
Applying the Fourier transform $\mathcal F_{x\to\xi}$ to both sides of the equality satisfied by $\tilde R(x,y)$, and setting $f(\xi,y):=\mathcal F_{x\to\xi}[\tilde R(\cdot,y)](\xi)$,
we find that
\[
\|f\|_{L^2(\mathbb R\times\mathbb R)}=\sqrt{2\pi}\|\tilde R\|_{L^2(\mathbb R\times\mathbb R)},
\]
by Plancherel's theorem, while $b_k(\xi,y):=\mathcal F_{x\to\xi}[u_k(\cdot,y)](\xi)$ satisfies
\[
\|b_k\|_{L^2(\mathbb R\times\mathbb R)}=\sqrt{2\pi}\|u_k\|_{L^2(\mathbb R\times\mathbb R)}.
\]
Finally
\[
\xi\partial_yf(\xi,y)=\sum_{k=0}^nb_k(\xi,y)y^k.
\]
Set $G(z):=e^{-z^2/2}/\sqrt{2\pi}$, the centered, reduced Gaussian density on $\mathbb R$. For each $\eps>0$, write the decomposition
\[
\begin{aligned}
f(\xi,0)=&\int_0^\infty\eps G(\eps y)\left(f(\xi,y)+f(\xi,-y)+2f(\xi,0)-f(\xi,y)-f(\xi,-y)\right)\dd y
\\
=&\int_\mathbb R\eps G(\eps y)f(\xi,y)\dd y\!-\!\int_0^\infty\eps G(\eps y)\left(\int_0^y(\partial_2f(\xi,z)-\partial_2f(\xi,-z))\dd z\right)\dd y
\\
=&\int_\mathbb R\eps G(\eps y)f(\xi,y)\dd y\!-\!\frac1\xi\int_0^\infty\eps G(\eps y)\left(\int_0^y\sum_{k=0}^n\beta_k(\xi,z)z^k\dd z\right)\dd y,
\end{aligned}
\]
where
\[
\beta_k(\xi,z)=b_k(\xi,z)-(-1)^kb_k(\xi,-z),\qquad k=0,\ldots,n.
\]
Applying Fubini's theorem to the summable function $(y,z)\mapsto \eps G(\eps y)\beta_k(\xi,z)z^k\mathbf 1_{0<z<y}$ defined a.e. on $\mathbb R\times\mathbb R$, leads to 
\[
\begin{aligned}
f(\xi,0)=&\int_\mathbb R\eps G(\eps y)f(\xi,y)\dd y-\frac1\xi\sum_{k=0}^n\int_0^\infty\left(z^k\int_z^\infty\eps G(\eps y)\dd y\right)\beta_k(\xi,z)\dd z
\\
=&\int_\mathbb R\eps G(\eps y)f(\xi,y)\dd y-\frac1\xi\sum_{k=0}^n\frac1{\eps^k}\int_0^\infty\left(\eps^kz^k\int_{\eps z}^\infty G(Y)\dd Y\right)\beta_k(\xi,z)\dd z.
\end{aligned}
\]
Hence
\[
\frac{|f(\xi,0)|^2}{n+2}\le\left(\int_\mathbb R\eps G(\eps y)f(\xi,y)\dd y\right)^2
\!\!+\sum_{k=0}^n\frac1{\eps^{2k}|\xi|^2}\left(\int_0^\infty\left(\eps^kz^k\int_{\eps z}^\infty G(Y)\dd Y\right)\beta_k(\xi,z)\dd z\right)^2\!\!.
\]
By the Cauchy-Schwarz inequality,
\[
\left(\int_\mathbb R\eps G(\eps y)f(\xi,y)\dd y\right)^2\le\eps\int_\mathbb R G(Y)^2\dd Y\int_\mathbb R|f(\xi,y)|^2\dd y=\frac\eps{2\sqrt\pi}\int_\mathbb R|f(\xi,y)|^2\dd y,
\]
while
\[
\begin{aligned}
\left(\int_0^\infty\left(\eps^kz^k\int_{\eps z}^\infty G(Y)\dd Y\right)\beta_k(\xi,z)\dd z\right)^2
\\
\le\frac1\eps\int_0^\infty Z^{2k}\left(\int_Z^\infty G(Y)\dd Y\right)^2\dd Z\int_0^\infty|\beta_k(\xi,z)|^2\dd z
\\
\le\frac1\eps\int_0^\infty Z^{2k}\left(\int_Z^\infty G(Y)\dd Y\right)\dd Z\int_0^\infty|\beta_k(\xi,z)|^2\dd z
\\
=\frac1\eps\int_0^\infty\frac{Y^{2k+1}}{2k+1}G(Y)\dd Y\int_0^\infty|\beta_k(\xi,z)|^2\dd z.
\end{aligned}
\]
Setting
\[
\gamma_k:=\int_0^\infty\frac{Y^{2k+1}}{2k+1}G(Y)\dd Y=\frac1{2k+1}\frac1{\sqrt{2\pi}}\int_0^\infty (2U)^ke^{-U}\dd U=\frac{2^kk!}{(2k+1)\sqrt{2\pi}},
\]
we arrive at the bound
\[
\frac{|f(\xi,0)|^2}{n+2}\le\frac\eps{2\sqrt\pi}\|f(\xi,\cdot)\|^2_{L^2(\mathbb R)}+\sum_{k=0}^n\frac{\gamma_k}{\eps^{2k+1}|\xi|^2}\|\beta_k(\xi,\cdot)\|^2_{L^2(0,+\infty)}
\]
which holds for all $\xi\not=0$, and for all $\eps>0$. 

At this point, we consider separately the cases $|\xi|\ge 1$ and $|\xi|<1$. In the first case, we equilibrate the term of order $\eps$ and the term of highest degree in $1/\eps$, and set
\[
\eps^{2n+2}=\frac1{|\xi|^2}.
\]
Thus
\[
\begin{aligned}
\mathbf 1_{|\xi|\ge 1}\frac{|\xi|^\frac1{n+1}}{n+2}|f(\xi,0)|^2\le&\tfrac1{2\sqrt\pi}\|f(\xi,\cdot)\|^2_{L^2(\mathbb R)}
\\
&+\!\sum_{k=0}^{n-1}\frac{\mathbf 1_{|\xi|\ge 1}\gamma_k}{|\xi|^\frac{2(n-k)}{n+1}}\|\beta_k(\xi,\cdot)\|^2_{L^2(0,+\infty)}
\!+\!\gamma_n\|\beta_n(\xi,\cdot)\|^2_{L^2(0,+\infty)}
\\
\le&\tfrac1{2\sqrt\pi}\|f(\xi,\cdot)\|^2_{L^2(\mathbb R)}\!+\!\sum_{k=0}^n\gamma_k\|\beta_k(\xi,\cdot)\|^2_{L^2(0,+\infty)}\,.
\end{aligned}
\]
In the second case, i.e. $|\xi|<1$, we equilibrate the term of order $\eps$ and the term of lowest degree in $1/\eps$, and set
\[
\eps=\frac1{|\xi|},
\]
so that
\[
\begin{aligned}
\mathbf 1_{|\xi|<1}|\xi|\frac{|f(\xi,0)|^2}{n+2}\le&\tfrac1{2\sqrt\pi}\|f(\xi,\cdot)\|^2_{L^2(\mathbb R)}+\sum_{k=0}^n\gamma_k|\xi|^{2k}\mathbf 1_{|\xi|<1}\|\beta_k(\xi,\cdot)\|^2_{L^2(0,+\infty)}
\\
\le&\tfrac1{2\sqrt\pi}\|f(\xi,\cdot)\|^2_{L^2(\mathbb R)}+\sum_{k=0}^n\gamma_k\|\beta_k(\xi,\cdot)\|^2_{L^2(0,+\infty)}.
\end{aligned}
\]
Summarizing, we arrive at the inequality
\[
\begin{aligned}
\tfrac1{n+2}\min\left(|\xi|,|\xi|^\frac1{n+1}\right)|f(\xi,0)|^2\le&\tfrac1{2\sqrt\pi}\|f(\xi,\cdot)\|^2_{L^2(\mathbb R)}+\sum_{k=0}^n\gamma_k\|\beta_k(\xi,\cdot)\|^2_{L^2(0,+\infty)}
\\
\le&\tfrac1{2\sqrt\pi}\|f(\xi,\cdot)\|^2_{L^2(\mathbb R)}+\sum_{k=0}^n2\gamma_k\|b_k(\xi,\cdot)\|^2_{L^2(\mathbb R)}.
\end{aligned}
\]
Now, by Young's inequality
\[
|\xi|^\frac1{n+1}\le\frac{|\xi|}{n+1}+\frac{n}{n+1},
\]
so that
\[
\begin{aligned}
\tfrac1{n+2}|\xi|^\frac1{n+1}|f(\xi,0)|^2\le&\tfrac{(|\xi|+n)\mathbf 1_{|\xi|<1}}{(n+1)(n+2)}|f(\xi,0)|^2+\tfrac{\mathbf 1_{|\xi|\ge 1}}{(n+2)}|\xi|^\frac1{n+1}|f(\xi,0)|^2
\\
\le&\tfrac{\mathbf 1_{|\xi|<1}}{n+2}|f(\xi,0)|^2+\tfrac1{n+2}\min\left(|\xi|,|\xi|^\frac1{n+1}\right)|f(\xi,0)|^2
\\
\le&\tfrac{\mathbf 1_{|\xi|<1}}{n+2}\|\rho\|^2_{L^1(\mathbb R)}+\tfrac1{n+2}\min\left(|\xi|,|\xi|^\frac1{n+1}\right)|f(\xi,0)|^2,
\end{aligned}
\]
and hence
\[
\begin{aligned}
\tfrac1{n+2}\int_\mathbb R|\xi|^\frac1{n+1}|f(\xi,0)|^2\dd\xi
\le&\tfrac{2}{n+2}\|\rho\|^2_{L^1(\mathbb R)}+\tfrac1{2\sqrt\pi}\|f\|^2_{L^2(\mathbb R\times\mathbb R)}+\sum_{k=0}^n2\gamma_k\|b_k\|^2_{L^2(\mathbb R\times\mathbb R)}
\\
\le&\tfrac{2}{n+2}+\sqrt\pi\|\tilde R\|_{L^2(\mathbb R\times\mathbb R)}+\sum_{k=0}^n2\gamma_k\|b_k\|^2_{L^2(\mathbb R\times\mathbb R)}.
\end{aligned}
\]
By the same token
\[
\begin{aligned}
\int_\mathbb R\frac{1+|\xi|^\frac1{n+1}}{2(n+2)}|f(\xi,0)|^2\dd\xi\le&\int_\mathbb R\frac{\mathbf 1_{|\xi|<1}+|\xi|^\frac1{n+1}\mathbf 1_{|\xi|\ge 1}}{n+2}|f(\xi,0)|^2\dd\xi
\\
\le&\tfrac2{n+2}+\tfrac2{n+2}+\sqrt\pi\|\tilde R\|_{L^2(\mathbb R\times\mathbb R)}+\sum_{k=0}^n2\gamma_k\|b_k\|^2_{L^2(\mathbb R\times\mathbb R)}.
\end{aligned}
\]
This concludes the proof.
\end{proof}

\begin{remark}
Notice that Theorem \ref{thm:VAd=1} (and its proof) make use of the fact that $\rho\in L^1(\mathbb R)$, implied by the fact that $R$ is a trace-class operator, to conclude that $\rho$ belongs to $H^s(\mathbb R)$
for some $s>0$. If one assumes only that $R$ is a Hilbert-Schmidt operator, the proof only leads to a bound on
\[
\int_\mathbb R\min(|\xi|,|\xi|^{2s})|\mathcal F_{x\to\xi}\rho(\xi)|^2\dd\xi
\]
with $s=\frac1{2(n+1)}$. This control is weaker than a bound on the homogeneous Sobolev norm $\dot H^s(\mathbb R)$, except in the case $n=0$, where it is found that
\[
\|\rho\|_{\dot H^{1/2}(\mathbb R)}\le C\|\tilde R\|_{L^2(\mathbb R\times\mathbb R)}^{1/2}\|\partial_x\partial_y\tilde R\|_{L^2(\mathbb R\times\mathbb R)}^{1/2}.
\]
This is the analogue of the basic $L^2$ case of velocity averaging in space dimension $1$: for all $f\equiv f(x,v)$, with $x,v\in\mathbb R$,
\[
\left\|\int_\mathbb R fdv\right\|_{\dot H^{1/2}(\mathbb R)}\le C\|f\|_{L^2(\mathbb R\times\mathbb R)}^{1/2}\|v\partial_xf\|_{L^2(\mathbb R\times\mathbb R)}^{1/2}.
\]
In that case, no localization in $v$ is needed: see \cite{golse1988regularity}.
\end{remark}


\section{Pure states and the Wigner transform}\label{sec:WignerPureSt}


In view of the remark preceding the statement of Theorem \ref{thm:VAWignerL2}, the problem of velocity averaging for the Wigner transform of pure states cannot be addressed by the method used in 
section \ref{sec:VAMixedStWig} in the classical limit, and certainly not by using Theorem \ref{thm: diperna lions}. The case of pure states in the semiclassical regime must therefore be discussed separately. 

As a preparation, the present section discusses a few properties of Wigner functions of pure states. These properties have several important implications, especially in semiclassical velocity averaging.

We first recall the definition of a pure quantum state.

\begin{definition}
A density operator $R$ on $\mathfrak H$ defines a \emph{pure state} if it is a projection, in other words if it is an idempotent element of the algebra $\mathcal L(\mathfrak H)$, i.e. if $R^2=R$.
\end{definition}

Since a density operator is self-adjoint, $R$ defines a pure state if and only if it is an orthogonal projection, i.e. $R^2=R=R^*$. Since the rank of an orthogonal projection in $\mathfrak H$ is its trace,
and since a the trace of a density operator is $1$, a pure state corresponds to a rank-one orthogonal projection in $\mathfrak H$. Pick $\psi\in\mathfrak H\setminus\{0\}$ so that the range of the
rank-one orthogonal projection $R$ is $\mathbb C\psi$; without loss of generality, one can assume that $\|\psi\|_\mathfrak H=1$. Thus the operator $R$ can be written in Dirac's bra-ket notation as
\begin{equation}\label{Rk1bra-ket}
	R = \ketbra{\psi}{\psi},
\end{equation}
where we recall that $|\psi\rangle$ designates the vector $\psi\in\mathfrak H$, while $\langle\psi|$ designates the continuous linear functional $\phi\mapsto(\psi|\phi)_\mathfrak H$ on $\mathfrak H$.
When $\mathfrak H=L^2(\mathbb R^d)$, this is equivalent to the fact that the pure state density operator $R$ has integral kernel
\begin{equation}\label{Rk1Kernel}
	R(X,Y) = \psi(X)\overline{\psi(Y)}.
\end{equation} 

There are several well-known characterizations of pure states in quantum mechanics, besides the condition $R^2=R$ in the definition above. For instance, a density operator represents a pure state
if and only if its von Neumann entropy $-\mathrm{tr}(R\ln R)=0$. However, for the purpose of our discussion, we are chiefly interested in characterizations of pure states involving the Wigner function
of the associated density operator. One could try to use the identity $W_\hbar[R]=W_\hbar[R^2]=W_\hbar[R]\circ_\hbar W_\hbar[R]$ where $\circ_\hbar$ designates the Moyal product. A quick glance
at the formula for the Moyal product (see for instance formula (18.5.6) in chapter XVIII of \cite{Hormander3}) suggests that this approach could prove somewhat unpractical.

We shall appeal to the following criterion. As a matter of fact, we shall not use exactly this criterion, but a consequence thereof, stated below as Corollary \ref{C-ImpliRk1}.

\begin{lemma}\label{L-CharRk1}
Let $R=R^*\in\mathcal L^1(L^2(\mathbb R^d))$ with Wigner function $W_\hbar[R](x,\xi)$. Assume that $(x,y)\mapsto\mathcal F_{\xi\to y}W_\hbar[R](x,y)$ (the partial Fourier transform of 
$W_\hbar[R]$ in the $\xi$-variable) is of class $C^1$ on $\mathbb R^d\times\mathbb R^d$ and that $\mathcal F_{\xi\to y}W_\hbar[R](x,y)\not=0$ for all $x,y\in\mathbb R^d$. 
Then $R$ is a rank-one density operator if and only if, for all $j,k=1,\ldots,d$,
\begin{equation}\label{IdentTatarskii}
\left\{
\begin{aligned}
\tfrac4{\hbar^2}\partial_{y_j}\left(\frac{\partial_{y_k}\mathcal F_{\xi\to y}W_\hbar[R](x,y)}{\mathcal F_{\xi\to y}W_\hbar[R](x,y)}\right)
&\!=\!\partial_{x_j}\left(\frac{\partial_{x_k}\mathcal F_{\xi\to y}W_\hbar[R](x,y)}{\mathcal F_{\xi\to y}W_\hbar[R](x,y)}\right),
\\
\partial_{y_j}\left(\frac{\partial_{x_k}\mathcal F_{\xi\to y}W_\hbar[R](x,y)}{\mathcal F_{\xi\to y}W_\hbar[R](x,y)}\right)
&\!=\!\partial_{x_j}\left(\frac{\partial_{y_k}\mathcal F_{\xi\to y}W_\hbar[R](x,y)}{\mathcal F_{\xi\to y}W_\hbar[R](x,y)}\right),
\end{aligned}
\right.
\quad\text{ in }\mathcal D'(\mathbb R^d\times\mathbb R^d).
\end{equation}
\end{lemma}

Notice that (a formal variant of) Lemma \ref{L-CharRk1} in the special case of space dimension $d=1$ was already stated in \cite{tatarskiui1983wigner}.

\begin{proof}
Assume first that $R=\ketbra{\psi}{\psi}$, whose integral kernel is $R(X,Y)=\psi(X)\overline{\psi(Y)}$. Then 
\[
\mathcal F_{\xi\to y}W_\hbar[R](x,y)=\psi(x-\tfrac\hbar{2}y)\overline{\psi(x+\tfrac{\hbar}2y)},
\]
and our assumptions imply that $\psi\in C^1(\mathbb R^d,\mathbb C\setminus\{0\})$. Then, for $\ell=1,\ldots,d$,
\[
\left\{
\begin{aligned}
\frac{\partial_{x_\ell}\mathcal F_{\xi\to y}W_\hbar[R](x,y)}{\mathcal F_{\xi\to y}W_\hbar[R](x,y)}
&=\frac{\partial_\ell\psi(x-\tfrac\hbar{2}y)}{\psi(x-\tfrac\hbar{2}y)}+\frac{\partial_\ell\overline{\psi(x+\tfrac\hbar{2}y)}}{\overline{\psi(x+\tfrac\hbar{2}y)}},
\\
\frac{\partial_{y_\ell}\mathcal F_{\xi\to y}W_\hbar[R](x,y)}{\mathcal F_{\xi\to y}W_\hbar[R](x,y)}
&=-\tfrac\hbar{2}\frac{\partial_\ell\psi(x-\tfrac\hbar{2}y)}{\psi(x-\tfrac\hbar{2}y)}+\tfrac\hbar{2}\frac{\partial_\ell\overline{\psi(x+\tfrac\hbar{2}y)}}{\overline{\psi(x+\tfrac\hbar{2}y)}}.
\end{aligned}
\right.
\]
Set $\omega_+=\mathbb C\setminus[0,+\infty)$ and $\omega_-=\mathbb C\setminus(-\infty,0]$. Let $\log_-$ be the principal determination of the logarithm on $\omega_-$, and set $\log_+z=\log_-(-z)+i\pi$. 
One has $\omega_+\cap\omega_-=H_+\cup H_-$ where $H_\pm$ is the open upper (resp.  lower) half-plane in $\mathbb C$. One easily checks that
\[
\begin{aligned}
\log_+z-\log_-z&=0,&&\qquad z\in H_+,
\\
\log_+z-\log_-z&=2i\pi,&&\qquad z\in H_-.
\end{aligned}
\]
Set $\Omega_\pm=\psi^{-1}(\omega_\pm)$, which is an open subset of $\mathbb R^d$ since $\psi$ is continuous. One has obviously $\mathbb R^d=\Omega_+\cup\Omega_-$ since $\psi$ takes its values 
in $\mathbb C\setminus\{0\}$. (Indeed $\Omega_+\cup\Omega_-=\psi^{-1}(\omega_+\cup\omega_-)=\psi^{-1}(\mathbb C\setminus\{0\})=\mathbb R^d$.) Besides 
\[
\Omega_+\cap\Omega_-=\psi^{-1}(H_+)\cup\psi^{-1}(H_-)\quad\text{ with }\psi^{-1}(H_\pm)\text{ open and disjoint in }\mathbb R^d.
\]
Since $\log_+\psi-\log_-\psi$ is a constant multiple of $2i\pi$ on $\psi^{-1}(H_\pm)$, one has
\[
\partial_\ell\log_+\psi=\frac{\partial_\ell\psi}{\psi}=\partial_\ell\log_-\psi\quad\text{ on }\Omega_+\cap\Omega_-.
\]
Hence
\[
\left\{
\begin{aligned}
\frac{\partial_{x_\ell}\mathcal F_{\xi\to y}W_\hbar[R](x,y)}{\mathcal F_{\xi\to y}W_\hbar[R](x,y)}
&=\partial_\ell(\log_\pm\psi)(x-\tfrac\hbar{2}y)+\partial_\ell(\log_\pm\overline\psi)(x+\tfrac\hbar{2}y),
\\
\frac{\partial_{y_\ell}\mathcal F_{\xi\to y}W_\hbar[R](x,y)}{\mathcal F_{\xi\to y}W_\hbar[R](x,y)}
&=-\tfrac\hbar{2}\partial_\ell(\log_\pm\psi)(x-\tfrac\hbar{2}y)+\tfrac\hbar{2}\partial_\ell(\log_\pm\overline\psi)(x+\tfrac\hbar{2}y),
\end{aligned}
\right.
\]
where the determination $\log_+$ or $\log_-$ of the logarithm is chosen according to whether $x-\tfrac\hbar{2}y$ and $x+\tfrac\hbar{2}y$ belong to $\Omega_+$ or $\Omega_-$, so that
\[
\left\{
\begin{aligned}
\partial_{x_j}\left(\frac{\partial_{x_k}\mathcal F_{\xi\to y}W_\hbar[R](x,y)}{\mathcal F_{\xi\to y}W_\hbar[R](x,y)}\right)
&=\partial_j\partial_k(\log_\pm\psi)(x-\tfrac\hbar{2}y)+\partial_j\partial_k(\log_\pm\overline\psi)(x+\tfrac\hbar{2}y),
\\
\partial_{x_j}\left(\frac{\partial_{y_k}\mathcal F_{\xi\to y}W_\hbar[R](x,y)}{\mathcal F_{\xi\to y}W_\hbar[R](x,y)}\right)
&=-\tfrac\hbar{2}\partial_j\partial_k(\log_\pm\psi)(x-\tfrac\hbar{2}y)+\tfrac\hbar{2}\partial_j\partial_k(\log_\pm\overline\psi)(x+\tfrac\hbar{2}y),
\\
\partial_{y_j}\left(\frac{\partial_{y_k}\mathcal F_{\xi\to y}W_\hbar[R](x,y)}{\mathcal F_{\xi\to y}W_\hbar[R](x,y)}\right)
&=\tfrac{\hbar^2}{4}\partial_j\partial_k(\log_\pm\psi)(x-\tfrac\hbar{2}y)+\tfrac{\hbar^2}{4}\partial_j\partial_k(\log_\pm\overline\psi)(x+\tfrac\hbar{2}y).
\end{aligned}
\right.
\]
Comparing the first and last line in the system above gives the first $d^2$ identities in the lemma. The second group of identities in the lemma follows from the symmetry in $j,k$ of the partial derivatives
$\partial_j\partial_k(\log_\pm\psi)$ and $\partial_j\partial_k(\log_\pm\overline\psi)$ in the sense of distributions on $\Omega_\pm$. Hence the conditions in the lemma are necessary for the density operator
$R$ to be of rank equal to one.

Conversely, assume that $\tilde R:=\mathcal F_{\xi\to y}W_\hbar[R]\in C^1(\mathbb R^d\times\mathbb R^d,\mathbb C\setminus\{0\})$ satisfies the identities in the lemma, and set
\[
R(X,Y)=\tilde R\left(\tfrac{X+Y}2,\tfrac{Y-X}\hbar\right),\qquad X,Y\in\mathbb R^d,
\]
which is an integral kernel for the density operator $R$. Set $U_\pm=R^{-1}(\omega_\pm)$, which is open in $\mathbb R^d\times\mathbb R^d$ since $R$ is continuous, and satisfy 
$\mathbb R^d\times\mathbb R^d=U_+\cup U_-$, since $R$ takes its values in $\mathbb C\setminus\{0\}=\omega_+\cup\omega_-$. (Indeed $U_+\cup U_-=R^{-1}(\omega_+\cup\omega_-)
=R^{-1}(\mathbb C\setminus\{0\})=\mathbb R^d\times\mathbb R^d$.) As above, 
\[
U_+\cap U_-= R^{-1}(H_+)\cup R^{-1}(H_-)\quad\text{ with }R^{-1}(H_\pm)\text{ open and disjoint in }\mathbb R^d\times\mathbb R^d,
\]
and
\[
\nabla_{X,Y}\log_+R(X,Y)=\frac{\nabla_{X,Y}R(X,Y)}{R(X,Y)}=\nabla_{X,Y}\log_-R(X,Y)\,,\qquad(X,Y)\in U_+\cap U_-,
\]
since $\log_+R-\log_-R$ is a constant multiple of $2i\pi$ on $R^{-1}(H_\pm)$. The identities in the lemma are recast as
\[
\left\{
\begin{aligned}
\tfrac4{\hbar^2}\partial_{y_j}\partial_{y_k}\log_\pm\tilde R(x,y)&=\partial_{x_j}\partial_{x_k}\log_\pm\tilde R(x,y)
\\
\partial_{y_j}\partial_{x_k}\log_\pm\tilde R(x,y)&=\partial_{x_j}\partial_{y_k}\log_\pm\tilde R(x,y)
\end{aligned}
\right.
\qquad\text{ in }\mathcal D'(V_\pm)\qquad\text{ for }1\le j,k\le d,
\]
where $V_\pm$ is the inverse image of $U_\pm$ by the linear transformation $(x,y)\mapsto(x-\tfrac\hbar{2}y,x+\tfrac\hbar{2}y)$. Since this linear transformation is one-to-one and onto, its inverse
maps $U_+\cup U_-=\mathbb R^d\times\mathbb R^d$ onto $V_+\cup V_-=\mathbb R^d\times\mathbb R^d$. By the chain rule
\[
\left\{
\begin{aligned}
\partial_{x_\ell}\log_\pm\tilde R(x,y)&=\partial_{x_\ell}\log_\pm R(x-\tfrac\hbar{2}y,x+\tfrac\hbar{2}y)=(\partial_{X_\ell}+\partial_{Y_\ell})\log_\pm R(x-\tfrac\hbar{2}y,x+\tfrac\hbar{2}y),
\\
\partial_{y_\ell}\log_\pm\tilde R(x,y)&=\partial_{y_\ell}\log_\pm R(x-\tfrac\hbar{2}y,x+\tfrac\hbar{2}y)=\tfrac\hbar{2}(\partial_{Y_\ell}-\partial_{X_\ell})\log_\pm R(x-\tfrac\hbar{2}y,x+\tfrac\hbar{2}y),
\end{aligned}
\right.
\]
so that, for $1\le j,k\le d$
\[
\left\{
\begin{aligned}
(\partial_{Y_j}-\partial_{X_j})(\partial_{Y_k}-\partial_{X_k})\log_\pm R(X,Y)&=(\partial_{X_j}+\partial_{Y_j})(\partial_{X_k}+\partial_{Y_k})\log_\pm R(X,Y),
\\
\tfrac\hbar{2}(\partial_{Y_j}-\partial_{X_j})(\partial_{Y_k}+\partial_{X_k})\log_\pm R(X,Y)&=\tfrac\hbar{2}(\partial_{X_j}+\partial_{Y_j})(\partial_{Y_k}-\partial_{X_k})\log_\pm R(X,Y),
\end{aligned}
\right.
\]
which is recast as
\[
\left\{
\begin{aligned}
(\partial_{Y_j}\partial_{X_k}+\partial_{X_j}\partial_{Y_k})\log_\pm R(X,Y)=0,
\\
(\partial_{Y_j}\partial_{X_k}-\partial_{X_j}\partial_{Y_k})\log_\pm R(X,Y)=0,
\end{aligned}
\right.
\]
or, equivalently
\[
\partial_{X_j}\partial_{Y_k}\ln_\pm R(X,Y)=0\quad\text{ in }\mathcal D'(U_\pm)\quad\text{ for all }1\le j,k\le d.
\]
This implies that 
\[
\partial_{X_j}\left(\frac{\partial_{Y_k}R(X,Y)}{R(X,Y)}\right)=0\quad\text{ in }\mathcal D'(\mathbb R^d\times\mathbb R^d)\quad\text{ for all }1\le j,k\le d.
\]
Since $\mathbb R^d\times\mathbb R^d$ is connected, this implies the existence of $b_1,\ldots,b_d\in C(\mathbb R^d)$ such that
\[
\frac{\partial_{Y_k}R(X,Y)}{R(X,Y)}=b_k(Y)\quad\text{ for all }(X,Y)\in\mathbb R^d\times\mathbb R^d\,.
\]
Besides
\[
\partial_{Y_j}b_k(Y)=\partial_{Y_k}b_j(Y)\quad\text{ in }\mathcal D'(\mathbb R^d)\quad\text{ for all }1\le j,k\le d.
\]
Indeed
\[
b_k(Y)=\partial_{Y_k}\log_\pm R(X,Y)\quad\text{ for all }(X,Y)\in U_\pm\quad\text{ and all }k=1,\ldots,d,
\]
so that
\[
\partial_{Y_j}b_k(Y)=\partial_{Y_k}b_j(Y)\quad\text{ in }\mathcal D'(U_+\cup U_-)\quad\text{ for all }1\le j,k\le d,
\]
and $U_+\cup U_-=\mathbb R^d\times\mathbb R^d$. Hence there exists $B\in C^1(\mathbb R^d)$ such that 
\[
\nabla B(Y)=(b_1(Y),\ldots,b_d(Y))\,,\quad Y\in\mathbb R^d.
\]
Therefore
\[
\log_\pm R(X,Y)-B(Y)=\mathfrak A_\alpha^\pm(X)\quad\text{ for all }(X,Y)\in U^\pm_\alpha,
\]
where $U^\pm_\alpha$ is the family of connected components of $U_\pm$.

Exchanging the variables $X$ and $Y$, and arguing as above shows that there exists $A\in C^1(\mathbb R^d)$ so that
\[
\log_\pm R(X,Y)-A(X)=\mathfrak B_\alpha^\pm(Y)\quad\text{ for all }(X,Y)\in U^\pm_\alpha.
\]
In other words
\[
\mathfrak A_\alpha^\pm(X)+B(Y)=A(X)+\mathfrak B_\alpha(Y)\quad\text{ for all }(X,Y)\in U^\pm_\alpha,
\]
so that
\[
A(X)-\mathfrak A_\alpha^\pm(X)=B(Y)-\mathfrak B_\alpha^\pm(Y)\quad\text{ for all }(X,Y)\in U^\pm_\alpha.
\]
Therefore, there exists a family of constants $C_\alpha^\pm$ indexed by the family of connected components $U_\alpha^\pm$ such that 
\[
A(X)-\mathfrak A_\alpha^\pm(X)=B(Y)-\mathfrak B_\alpha^\pm(Y)=C_\alpha^\pm\quad\text{ for all }(X,Y)\in U^\pm_\alpha,
\]
so that
\[
\log_\pm R(X,Y)=A(X)+B(Y)-C_\alpha^\pm\quad\text{ for all }(X,Y)\in U^\pm_\alpha.
\]
Hence
\[
R(X,Y)=\exp(-C_\alpha^\pm)e^{A(X)}e^{B(Y)}\quad\text{ for all }(X,Y)\in U^\pm_\alpha.
\]
Thus
\[
R(X,Y)e^{-A(X)}e^{-B(Y)}=\exp(-C_\alpha^\pm)\quad\text{ for all }(X,Y)\in U^\pm_\alpha,
\]
and the right-hand side of this equality is locally constant on $\mathbb R^d\times\mathbb R^d$ which is connected, while the left-hand side is continuous, since $A$ and $B$ are continuous on $\mathbb R^d$. 
Hence both sides of this equality are equal to a constant on $\mathbb R^d\times\mathbb R^d$:
\[
R(X,Y)=re^{A(X)}e^{B(Y)}\quad\text{ for all }(X,Y)\in\mathbb R^d\times\mathbb R^d,
\]
where $r\not=0$ is a constant on $\mathbb R^d\times\mathbb R^d$. In particular 
\[
|r|\|e^A\|_{L^2(\mathbb R^d)}\|e^B\|_{L^2(\mathbb R^d)}=\|R\|_{L^2(\mathbb R^d\times\mathbb R^d)}\le\|R\|_{\mathcal L^1(L^2(\mathbb R^d))},
\]
so that $e^A\in L^2(\mathbb R^d)$ and $\mathrm{Ran}(R)=\mathbb C e^A$, which implies that $R$ is a rank-one operator.
\end{proof}

\begin{remark}
In the case of space dimension $d=1$, the identities in Lemma \ref{L-CharRk1} boil down to the single equality
\[
\tfrac4{\hbar^2}\partial_y\left(\frac{\partial_y\mathcal F_{\xi\to y}W_\hbar[R](x,y)}{\mathcal F_{\xi\to y}W_\hbar[R](x,y)}\right)
=\partial_x\left(\frac{\partial_x\mathcal F_{\xi\to y}W_\hbar[R](x,y)}{\mathcal F_{\xi\to y}W_\hbar[R](x,y)}\right),
\]
or, equivalently, to the d'Alembert equation (wave equation in space dimension $1$)
\[
\tfrac4{\hbar^2}\partial_y^2\log_\pm\tilde R(x,y)=\partial_x^2\log_\pm\tilde R(x,y)\qquad\text{ in }\mathcal D'(V_\pm),
\]
with speed of propagation $2/\hbar$. We recall that $V_\pm$ is the inverse image of $U_\pm$ by the linear transformation $(x,y)\mapsto(x+\tfrac\hbar{2}y,x-\tfrac\hbar{2}y)$, while $U_\pm=R^{-1}(\omega_\pm)$,
with $\omega_+:=\mathbb C\setminus[0,+\infty)$ and $\omega_-:=\mathbb C\setminus(-\infty,0]$.
\end{remark}

\smallskip
We shall later need the following implication of Lemma \ref{L-CharRk1}: if $R=R^*\in\mathcal L^1(L^2(\mathbb R^d))$ is a rank-one density operator satisfying the assumptions of Lemma \ref{L-CharRk1},
\[
\tilde R(x,-y)=\mathcal F_{\xi\to y}W_\hbar[R](x,y)
\] 
satisfies, for all $j,k=1,\ldots,d$, the identity
\begin{equation}\label{IdentPure}
\begin{aligned}
\tilde R(x,y)\partial_{y_j}\partial_{y_k}\tilde R(x,y)-&(\partial_{y_j}\tilde R(x,y))(\partial_{y_k}\tilde R(x,y))
\\
=&\tilde R(x,y)^2\partial_{y_j}\left(\frac{\partial_{y_k}\tilde R(x,y)}{\tilde R(x,y)}\right)=\tfrac14\hbar^2\tilde R(x,y)^2\partial_{x_j}\left(\frac{\partial_{x_k}\tilde R(x,y)}{\tilde R(x,y)}\right)
\\
=&\tfrac14\hbar^2\left(\tilde R(x,y)\partial_{x_j}\partial_{x_k}\tilde R(x,y)-(\partial_{x_j}\tilde R(x,y))(\partial_{x_k}\tilde R(x,y))\right)\,.
\end{aligned}
\end{equation}

\smallskip
However, since both sides of the second equality above, which follows from Lemma \ref{L-CharRk1}, are multiplied by $\tilde R(x,y)$, one can hope to avoid the assumption that $\tilde R(x,y)\not=0$ required
to apply Lemma \ref{L-CharRk1}. 

That this is indeed the case follows from the observation below, stated in an abstract setting.

\begin{lemma}\label{L-ImpliRk1} Let $D,D'$ be commuting derivations on a commutative algebra $\mathcal A$. Set
\[
T_{D,D'}(f):=DD'(f^2)-4(Df)(D'f)\,.
\]
For all $f,g\in\mathcal A$, one has
\[
T_{D,D'}(fg)=T_{D,D'}(f)g^2+f^2T_{D,D'}(g)\,.
\]
\end{lemma}

\begin{proof} By the Leibniz formula, one has indeed
\[
\begin{aligned}
T_{D,D'}(fg)=&D(g^2D'(f^2)+f^2D'(g^2))-4(gDf+fDg)(gD'f+fD'g)
\\
=&g^2DD'(f^2)+f^2DD'(g^2)+D(g^2)D'(f^2)+D(f^2)D'(g^2)
\\
&-4(gDf+fDg)(gD'f+fD'g)
\\
=&g^2DD'(f^2)+f^2DD'(g^2)+4gf(Dg)(D'f)+4fg(Df)(D'g)
\\
&-4g^2(Df)(D'f)-4f^2(Dg)(D'g)-4fg(Dg)(D'f)-4gf(Df)D'g)
\\
=&g^2DD'(f^2)+f^2DD'(g^2)-4g^2(Df)(D'f)-4f^2(Dg)(D'g)
\\
=&T_{D,D'}(f)g^2+f^2T_{D,D'}(g)\;.
\end{aligned}
\]
\end{proof}

\smallskip
With this, we shall establish the equality between the leftmost and rightmost sides of \eqref{IdentPure} \textit{without} using the nonvanishing condition $\mathcal F_{\xi\to y}W_\hbar[R]\not=0$ everywhere 
on $\mathbb R^d\times\mathbb R^d$ assumed in Lemma \ref{L-CharRk1}. This equality is at the core of our discussion of semiclassical velocity averaging for pure states in the next section.

\begin{corollary}\label{C-ImpliRk1}
Consider the pure state density operator $R:=|\psi\rangle\langle\psi|$, where $\psi\in H^2(\mathbb R^d)$. Then 
\[
\tilde R(x,y):=\mathcal F_{\xi\to y}W_\hbar[R](x,-y)
\]
satisfies, for all $j,k=1,\ldots,d$, the identity
\begin{equation}\label{IdentBisPure}
\begin{aligned}
\tilde R(x,y)\partial_{y_j}\partial_{y_k}\tilde R(x,y)-&(\partial_{y_j}\tilde R(x,y))(\partial_{y_k}\tilde R(x,y))
\\
&=\tfrac14\hbar^2\left(\tilde R(x,y)\partial_{x_j}\partial_{x_k}\tilde R(x,y)-(\partial_{x_j}\tilde R(x,y))(\partial_{x_k}\tilde R(x,y))\right)\,.
\end{aligned}
\end{equation}
\end{corollary}

\begin{proof}
Let us first treat the case where $\psi$ is smooth. With the notations of Lemma \ref{L-ImpliRk1}, set
\[
T_{x_j,x_k}:=T_{\partial_{x_j},\partial_{x_k}}\,,\qquad T_{y_j,y_k}:=T_{\partial_{y_j},\partial_{y_k}}\,,\qquad j,k=1,\ldots,d\,.
\]
Obviously
\[
\partial_{y_\ell}\psi(x\pm\tfrac12\hbar y)=\pm\tfrac12\hbar\partial_{x_\ell}\psi(x\pm\tfrac12\hbar y)\,,
\]
so that
\[
T_{y_j,y_k}(\psi(x\pm\tfrac12\hbar y))=\tfrac14\hbar^2T_{x_j,x_k}(\psi(x\pm\tfrac12\hbar y))\,,\qquad j,k=1,\ldots,d\,.
\]
Applying Lemma \ref{L-ImpliRk1} with $f=\psi(x\pm\tfrac12\hbar y)$ and $g=\overline{\psi(x-\tfrac12\hbar y)}$ while $D=\partial_{y_j}$ or $\partial_{x_j}$ and $D'=\partial_{y_k}$ or $\partial_{x_k}$ shows that
\[
\tilde R(x,y):=\psi(x+\tfrac12\hbar y)\overline{\psi(x-\tfrac12\hbar y)}
\]
satisfies
\[
T_{y_j,y_k}(\tilde R(x,y))=\tfrac14\hbar^2T_{x_j,x_k}(\tilde R(x,y))\,,\qquad j,k=1,\ldots,d\,.
\]
The conclusion follows from observing that
\[
\begin{aligned}
T_{y_j,y_k}(\tilde R(x,y))=&\partial_{y_j}\partial_{y_k}(\tilde R(x,y)^2)-4(\partial_{y_j}\tilde R(x,y))(\partial_{y_k}\tilde R(x,y))
\\
=&2\partial_{y_j}(\tilde R(x,y)\partial_{y_k}\tilde R(x,y))-4(\partial_{y_j}\tilde R(x,y))(\partial_{y_k}\tilde R(x,y))
\\
=&2\tilde R(x,y)\partial_{y_j}\partial_{y_k}\tilde R(x,y)-2(\partial_{y_j}\tilde R(x,y))(\partial_{y_k}\tilde R(x,y))\,,
\end{aligned}
\]
and similarly
\[
T_{x_j,x_k}(\tilde R(x,y))=2\tilde R(x,y)\partial_{x_j}\partial_{x_k}\tilde R(x,y)-2(\partial_{x_j}\tilde R(x,y))(\partial_{x_k}\tilde R(x,y))\,.
\]

If $\psi\in H^2(\mathbb R^d)$, then $(X,Y)\mapsto\psi(X)\overline{\psi(Y)}$ belongs to $H^2(\mathbb R^d\times\mathbb R^d)$. This is most easily seen in Fourier variables, since
\[
(1+|\xi|+|\eta|)^2|\hat\psi(\xi)||\hat\psi(\eta)|\le(1+|\xi|)^2|\hat\psi(\xi)|(1+|\eta|)^2|\hat\psi(\eta)|\in L^2(\mathbb R^d_\xi\times\mathbb R^d_\eta)\,.
\]
Hence $\tilde R\in H^2(\mathbb R^d\times\mathbb R^d)$ (as the composition of the function $(X,Y)\mapsto\psi(X)\overline{\psi(Y)}$ which belongs to $H^2(\mathbb R^d\times\mathbb R^d)$  with the linear
diffeomorphism  $(x,y)\mapsto(x+\tfrac\hbar{2}y,x-\tfrac\hbar{2}y)$). Therefore $\tilde R$, $\partial_{y_\ell}\tilde R$ and $\partial_{y_j}\partial_{y_k}\tilde R$ belong to $L^2(\mathbb R^d\times\mathbb R^d)$,
and the same is true of $\partial_{x_\ell}\tilde R$ and $\partial_{x_j}\partial_{x_k}\tilde R$. Hence all the products involved in the identity \eqref{IdentBisPure} belong to $L^1(\mathbb R^d\times\mathbb R^d)$,
and \eqref{IdentBisPure} is an equality a.e. between functions in $L^1(\mathbb R^d\times\mathbb R^d)$.

Since we have seen that \eqref{IdentBisPure} holds true if $\psi$ is smooth, the case of $\psi\in H^2(\mathbb R^d)$ follows by density of the Schwartz space $\mathcal S(\mathbb R^d)$ of smooth functions
with rapidly decaying derivatives of all orders in $H^2(\mathbb R^d)$.
\end{proof}

\begin{remark}
Notice that \eqref{IdentBisPure} is a \textit{consequence} of the assumption that $R$ is a pure state density operator, which does \textit{not} require the condition $\psi\not=0$ used in Lemma \ref{L-CharRk1}. 
As we explained in \eqref{IdentPure}, this identity is a straightforward consequence of Lemma \ref{L-CharRk1} if one knows that $\psi\not=0$. We do not claim that \eqref{IdentBisPure} can be used to obtain
a characterization of pure states (i.e. the converse of that implication) without assuming $\psi\not=0$ as in Lemma \ref{L-CharRk1}. Although this converse implication is not used in this paper, we think that 
Lemma \ref{L-CharRk1} is a result of independent interest as it (a) makes Tatarski\u\i's formal argument in \cite{tatarskiui1983wigner} fully rigorous and (b) extends it to space dimensions higher than one.
\end{remark}


\section{Semiclassical velocity averaging and pure states}\label{sec:PureSt}


With Corollary \ref{C-ImpliRk1}, we shall now discuss the possibility of obtaining a semiclassical velocity averaging result similar to Theorem \ref{thm:VAWignerL2}  in the case of a family of pure quantum states.

\begin{proposition}\label{P-Monokinetic}
Let $\psi_\hbar\in H^2(\mathbb R^d)$ be a family of wave functions such that $\|\psi_\hbar\|_{L^2(\mathbb R^d)}=1$, and let $w_\hbar:=W_\hbar[\ketbra{\psi_\hbar}]$. Assume that a subsequence $\psi_{\hbar_n}$
satisfies
\[
w_{\hbar_n}\to w\text{ in }\mathcal S'(\mathbb R^d\times\mathbb R^d),\quad\text{ and }\quad\hbar_n\|\nabla\rho_{\hbar_n}\|_{L^2(B(0,R))}\to 0
\]
for all $R>0$ as $\hbar_n\to 0$. Assume that
\begin{equation}\label{Def+LimE}
\mathcal E_{\hbar_n}=\tfrac1{2m}\int_{\mathbb R^d}|\xi|^2w_{\hbar_n}\dd\xi\to\mathcal E:=\tfrac1{2m}\int_{\mathbb R^d}|\xi|^2w\dd\xi\quad\text{ weakly in }L^2(\mathbb R^d)
\end{equation}
as $\hbar_n\to 0$, while
\begin{equation}\label{Def+LimRhoJ}
\rho_{\hbar_n}:=\int_{\mathbb R^d}w_{\hbar_n}\dd\xi\to\rho:=\int_{\mathbb R^d}w\dd\xi\quad\text{ and }J_{\hbar_n}:=\tfrac1m\int_{\mathbb R^d}\xi w_{\hbar_n}\dd\xi\to J:=\tfrac1m\int_{\mathbb R^d}\xi w\dd\xi
\end{equation}
in $L^2_{loc}(\mathbb R^d)$ as $\hbar_n\to 0$ for each $R>0$. Set
\[
u(x):=\frac{\mathbf 1_{\rho(x)>0}}{\rho(x)}J(x)\in\mathbb R^d.
\]
Then $w$ is a monokinetic, positive Borel measure on the phase space $\mathbb R^d\times\mathbb R^d$, i.e.
\[
w=\rho(x)\delta(\xi-u(x)).
\]
\end{proposition}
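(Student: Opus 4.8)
\emph{Overview of the plan.} I would reduce the assertion to the equality case of the Cauchy--Schwarz inequality for the nonnegative measure $w$, and then force that equality by passing to the limit in a pointwise identity for pure states obtained from Corollary~\ref{C-KeyMadelungIdent}.

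\emph{Step 1: reduction.} By Proposition~\ref{prop:properties wigner}\,\ref{W bounded}--\ref{W positive}, the limit $w$ is a bounded nonnegative Radon measure on $\mathbb R^d_x\times\mathbb R^d_\xi$; by \eqref{Def+LimRhoJ} its marginal in $x$ is $\rho\,\dd x$, so $w$ disintegrates as $w(\dd x,\dd\xi)=\rho(x)\,\dd x\,w_x(\dd\xi)$, with $w_x$ a probability measure for a.e.\ $x$ with $\rho(x)>0$ (and $w=0$ on $\{\rho=0\}$). Disintegrating \eqref{Def+LimE} and \eqref{Def+LimRhoJ} gives, for a.e.\ such $x$,
\[
\int_{\mathbb R^d}\xi\,w_x(\dd\xi)=\frac{mJ(x)}{\rho(x)},\qquad\int_{\mathbb R^d}|\xi|^2\,w_x(\dd\xi)=\frac{2m\mathcal E(x)}{\rho(x)}.
\]
By the Cauchy--Schwarz inequality applied to $\xi$ and $1$ in $L^2(w_x)$, the measure $w_x$ is a Dirac mass -- hence $w=\rho(x)\delta(\xi-u(x))$, $u$ being the limiting mean velocity -- \emph{if and only if} $2m\,\rho(x)\,\mathcal E(x)=m^2|J(x)|^2$ for a.e.\ $x$ (on $\{\rho=0\}$ this just reasserts $J=0$ and $w=0$). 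So it suffices to prove this scalar identity.

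\emph{Step 2: a pure-state energy identity.} Since $\psi_\hbar\in C^1(\mathbb R^d,\mathbb C\setminus\{0\})$, Corollary~\ref{C-KeyMadelungIdent} applies to $R=\ketbra{\psi_\hbar}$. Taking the trace ($j=k$, summed) of the identity there and evaluating at $y=0$, using $\tilde R_\hbar(x,0)=\rho_\hbar(x)$, $\partial_{y_j}\tilde R_\hbar(x,0)=imJ_{\hbar,j}(x)$, $\partial_{x_j}\tilde R_\hbar(x,0)=\partial_j\rho_\hbar(x)$ and $\int|\xi|^2W_\hbar[R]\,\dd\xi=-\Delta_y\tilde R_\hbar(x,0)$ (where $\tilde R_\hbar$ is the Weyl-variable kernel \eqref{eq:weyl variable density matrix} of $\ketbra{\psi_\hbar}$), one gets $2m\mathcal E_\hbar=\tfrac{m^2|J_\hbar|^2}{\rho_\hbar}-\tfrac{\hbar^2}{4}\rho_\hbar\Delta\ln\rho_\hbar$ (the second term being the trace of the Bohm/quantum pressure). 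Multiplying by $\rho_\hbar$ -- which clears the singular denominator -- and using $\rho_\hbar^2\Delta\ln\rho_\hbar=\tfrac12\Delta(\rho_\hbar^2)-2|\nabla\rho_\hbar|^2$ yields the key identity, valid in $\mathcal D'(\mathbb R^d)$:
\[
2m\,\rho_\hbar\,\mathcal E_\hbar=m^2|J_\hbar|^2+\frac{\hbar^2}{2}|\nabla\rho_\hbar|^2-\frac{\hbar^2}{8}\Delta(\rho_\hbar^2).
\]
(Equivalently, use the elementary formula $\int|\xi|^2W_\hbar\,\dd\xi=\hbar^2|\nabla\psi_\hbar|^2-\tfrac{\hbar^2}4\Delta\rho_\hbar$ together with $\rho_\hbar|\nabla\psi_\hbar|^2=\tfrac14|\nabla\rho_\hbar|^2+\tfrac{m^2}{\hbar^2}|J_\hbar|^2$, which follows from $\overline{\psi_\hbar}\nabla\psi_\hbar=\tfrac12\nabla\rho_\hbar+i\tfrac m\hbar J_\hbar$.) Then I would test this against $\varphi\in C_c^\infty(\mathbb R^d)$ and let $\hbar_n\to0$: on the left $\rho_{\hbar_n}\mathcal E_{\hbar_n}\rightharpoonup\rho\,\mathcal E$ in $\mathcal D'$ because $\rho_{\hbar_n}\to\rho$ strongly in $L^2_{loc}$ while $\mathcal E_{\hbar_n}\rightharpoonup\mathcal E$ weakly in $L^2$ (a product of a strongly and a weakly convergent $L^2$-sequence); on the right, $|J_{\hbar_n}|^2\to|J|^2$ in $L^1_{loc}$ (strong $L^2_{loc}$ convergence of $J_{\hbar_n}$), $\tfrac{\hbar_n^2}{8}\int\rho_{\hbar_n}^2\Delta\varphi\to0$ (since $\rho_{\hbar_n}^2\to\rho^2$ in $L^1_{loc}$ and $\hbar_n\to0$), and -- this is exactly where the hypothesis enters -- $\tfrac{\hbar_n^2}{2}\int\varphi|\nabla\rho_{\hbar_n}|^2\to0$ because $\hbar_n\|\nabla\rho_{\hbar_n}\|_{L^2(B(0,R))}\to0$ for $B(0,R)\supset\operatorname{supp}\varphi$. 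This gives $2m\,\rho\,\mathcal E=m^2|J|^2$ in $\mathcal D'$, hence a.e., which with Step~1 proves the proposition.

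\emph{Main obstacle.} The delicate point is passing to the limit in the nonlinear quantity $|J_{\hbar_n}|^2/\rho_{\hbar_n}$: it degenerates on the vacuum set $\{\rho=0\}$, and the only robust estimate (lower semicontinuity of the convex perspective functional $(\rho,J)\mapsto|J|^2/\rho$) points the wrong way, reproducing only the trivial Cauchy--Schwarz inequality. The device that breaks the tie is to multiply the pure-state energy identity by $\rho_\hbar$ \emph{before} taking the limit: this simultaneously removes the singular denominator, converts the quantum-pressure term from $\hbar^2|\nabla\sqrt{\rho_\hbar}|^2$ (not controlled by the hypothesis) into $\tfrac{\hbar^2}{4}|\nabla\rho_\hbar|^2$ (which is), and replaces $\mathcal E_\hbar$ by the product $\rho_\hbar\mathcal E_\hbar$ of a strongly- and a weakly-convergent factor, whose distributional limit is identifiable.
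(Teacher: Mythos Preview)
Your proof is correct and follows essentially the same approach as the paper: derive from Corollary~\ref{C-KeyMadelungIdent} (traced at $y=0$ and multiplied by $\rho_\hbar$) the identity $2m\,\rho_\hbar\mathcal E_\hbar-m^2|J_\hbar|^2=-\tfrac{\hbar^2}{8}\Delta(\rho_\hbar^2)+\tfrac{\hbar^2}{2}|\nabla\rho_\hbar|^2$, pass to the limit using strong-times-weak convergence on the left and the hypothesis $\hbar_n\|\nabla\rho_{\hbar_n}\|_{L^2(B(0,R))}\to0$ on the right, and conclude by the equality case of Cauchy--Schwarz for the nonnegative measure $w$. Your framing via disintegration and your discussion of why multiplying by $\rho_\hbar$ \emph{before} the limit is the crucial device are perhaps more explicit than the paper's presentation, but the argument is the same.
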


That $w_{\hbar_n}$ converges to a monokinetic Wigner measure $w=\rho(x)\delta(\xi-u(x))$ is a very strong indication that the strong convergence of the first moments $\rho_{\hbar_n}$ and $J_{\hbar_n}$ 
in $L^2_{loc}(\mathbb R^d)$ cannot be deduced from a velocity averaging lemma. Indeed, velocity averaging is based on the fact that, at the kinetic level of description, the orthogonal projections of the variable
$\xi$ on each line through the origin are regularly distributed: see condition (2.1) in \cite{golse1988regularity}. This is incompatible with a situation where the $\xi$ variable concentrates on a single value at
each position $x$, which is precisely the case of a monokinetic distribution function.

Thus, the conclusions expected from a velocity averaging lemma in the present setting, namely the strong convergence of the moments of order $0$ and $1$, i.e. of $\rho_{\hbar_n}$ and $J_{\hbar_n}$, lead to 
a conclusion \textit{which precludes using velocity averaging}! 

Put in other words, this observation does obviously not prevent $\rho_\hbar$ and $J_\hbar$ to be strongly relatively compact families of $L^2_{loc}$, but in that case, the strong compactness of these families 
cannot be expected to follow from velocity averaging.

\begin{remark}
The impossibility to deduce the strong compactness in $L^2_{loc}$ of the density functions and momentum densities of a family of pure states on $L^2$ from a velocity averaging lemma is based on the observation that this strong compactness
implies that Wigner measures of such states must be monokinetic. Conditions under which the Wigner measures of a family of pure states are monokinetic have been studied in detail in section 4.3 of \cite{MarkoPS}: see Theorem 4.5 and
Corollary 4.7 there. However, this reference assumes that $\psi_\eps$ is of the form
\[
\psi_\eps(x)=\sqrt{\rho_\eps(x)}e^{iS_\eps(x)/\eps}
\]
and that $\nabla S_\eps$ converges uniformly on some open neighborhood of the support of $\rho$, the limit of $\rho_\eps$ in $L^1$ to a $C^1$ function, or on some open neighborhood of the union of the supports of all the $\rho_\eps$. 
These assumptions are \textit{much stronger} than what can be deduced from velocity averaging. (Indeed, velocity averaging implies at best a gain of a $1/2$ derivative in $L^2_{loc}$, and never $C^1$ regularity.) And the difference between
the assumptions needed in section 4.3 of \cite{MarkoPS} and our Proposition \ref{P-Monokinetic} are not only minor technicalities. Indeed, it is \textit{absolutely essential} for our purpose in the present paper that the assumptions implying 
that the Wigner measure of a sequence is monokinetic coincide exactly with the compactness conclusions of velocity averaging in order to arrive at a contradiction. 
This is why we cannot appeal to  the results from section 4.3 in \cite{MarkoPS} to reach this contradiction.
\end{remark}

\begin{proof}
Call $\tilde R_\hbar(x,y):=\psi(x+\tfrac{\hbar}2y)\overline{\psi(x-\tfrac{\hbar}2y)}$. Observe that
\begin{equation}\label{FlaRhoJE}
\rho_{\hbar_n}(x)=\tilde R_{\hbar_n}(x,0)\,,\quad J_{\hbar_n}(x)=\tfrac1m(-i\nabla_y\tilde R_{\hbar_n}(x,0))\,,\quad\mathcal E_{\hbar_n}(x)=\tfrac1{2m}(-\Delta_y\tilde R_{\hbar_n}(x,0)).
\end{equation}
Next we use the identity in Corollary \ref{C-ImpliRk1} for $j=k$ and sum over $j=1,\ldots,d$, to find that
\[
\rho_{\hbar_n}(x)\Delta_y\tilde R_{\hbar_n}(x,0)=|\nabla_y\tilde R_{\hbar_n}(x,0)|^2+\tfrac{\hbar_n^2}4\rho_{\hbar_n}(x)^2\Delta_x\ln\rho_{\hbar_n}(x),
\]
or, equivalently
\[
2m\rho_{\hbar_n}(x)\mathcal E_{\hbar_n}(x)-m^2|J_{\hbar_n}(x)|^2=-\tfrac{\hbar_n^2}8\Delta_x\left(\rho_{\hbar_n}(x)^2\right)+\tfrac{\hbar_n^2}2|\nabla_x\rho_{\hbar_n}(x)|^2.
\]
Since $\rho_{\hbar_n}\to\rho$ and $J_{\hbar_n}\to J$ in $L^2(B(0,R))$ while $\mathcal E_{\hbar_n}\to\mathcal E$ weakly in $L^2(\mathbb R^d)$, one has
\[
\rho_{\hbar_n}(x)\mathcal E_{\hbar_n}\to\rho\mathcal E\text{ in }\mathcal D'(\mathbb R^d)\quad\text{ while }|J_{\hbar_n}|^2\to|J|^2\text{ in }L^1(B(0,R)),
\]
and
\[
\hbar_n^2\rho_{\hbar_n}^2\to 0\text{ in }L^1(B(0,R)),\quad\text{ so that }\hbar_n^2\Delta_x\left(\rho_{\hbar_n}^2\right)\to 0\text{ in }\mathcal D'(\mathbb R^d).
\]
Since $\hbar_n\|\nabla\rho_{\hbar_n}\|_{L^2(B(0,R))}\to 0$, passing to the limit in both sides of the equality above as $\hbar_n\to 0$ implies that
\[
2m\rho\mathcal E-m^2|J|^2=\int_{\mathbb R^d}w\dd\xi\int_{\mathbb R^d}|\xi|^2w\dd\xi-\left|\int_{\mathbb R^d}\xi w\dd\xi\right|^2=0\qquad\text{ a.e. on }\mathbb R^d.
\]
Multiplying both sides of this identity by $\mathbf 1_{\rho(x)>0}/\rho(x)^2$ shows that
\[
\frac{\mathbf 1_{\rho>0}}{\rho}\int_{\mathbb R^d}\left|\xi-\frac{\mathbf 1_{\rho>0}}{\rho}\int_{\mathbb R^d}\zeta w\dd\zeta\right|^2w\dd\xi
=\frac{\mathbf 1_{\rho>0}}{\rho}\int_{\mathbb R^d}|\xi|^2w\dd\xi-\left|\frac{\mathbf 1_{\rho>0}}{\rho}\int_{\mathbb R^d}\xi w\dd\xi\right|^2=0.
\]
so that, since $w$ is a positive (Borel) measure on $\mathbb R^d\times\mathbb R^d$,
\[
w=\rho(x)\delta(\xi-u(x)),\quad\text{ with }u:=\frac{\mathbf 1_{\rho>0}}{\rho}\int_{\mathbb R^d}\zeta w\dd\zeta.
\]
\end{proof}

\begin{remark}
The semiclassical regime is an essential feature in the negative result reported in Proposition \ref{P-Monokinetic}. In situations other than the semiclassical regimes, velocity averaging can be used even
in the case of pure states as in Theorem \ref{thm:VAWignerL2}. Returning to the discussion preceding Theorem \ref{thm:VAWignerL2}, and assuming that $\hbar\ge\hbar_0>0$, the inequality
\[
\sup_{\hbar_0\le\hbar\le 1}\|W_\hbar[R_\hbar]\|_{L^2}^2=C<\infty
\]
is obviously compatible with the constraint 
\[
1\le\text{rank}R_\hbar\cdot(2\pi\hbar)^d\sup_{\hbar_0\le\hbar\le 1}\|W_\hbar[R_\hbar]\|_{L^2}^2
\]
in the case $\text{rank}R_\hbar=1$, as can be seen by taking 
\[
C>\frac1{(2\pi\hbar_0)^d}\,.
\]
In fact, the idea of applying velocity averaging to the Wigner equation away from the semiclassical regime and in the case of pure states goes back to the work of Lions and Perthame \cite{lions1992lemmes}.
This reference explains the connection between dispersion for the free Schr\"odinger equation and velocity averaging for the Wigner equation --- see also \cite{GasserMarkoPerth} for an extension of
these results to the case of the Schr\"odinger equation with an external potential. However, this last reference is not based on velocity averaging itself, but on dispersion effects on the Wigner equation
formulated in terms of moment estimates (another issue discussed in \cite{lions1992lemmes}).
\end{remark}

\begin{remark}
Let us recall Theorem III.1 (5) in \cite{lions1993mesures}: given any Borel probability measure $\mu$ on $\mathbb R^d\times\mathbb R^d$, there exists a family $\psi_\hbar$ of $L^2(\mathbb R^d)$ such that
$\|\psi_\hbar\|_{L^2(\mathbb R^d)}=1$ and $\psi_\hbar\to 0$ weakly in $L^2(\mathbb R^d)$ while $W_\hbar[\ketbra{\psi_\hbar}]\to\mu$ in $\mathcal S'(\mathbb R^d\times\mathbb R^d)$ as $\hbar\to 0$.
Since $\mu$ is not necessarily monokinetic, this means that the additional assumptions in Proposition \ref{P-Monokinetic} are in general not satisfied by the family $\psi_\hbar$, and that these additional
assumptions are essential for Proposition \ref{P-Monokinetic} to hold.
\end{remark}

\begin{remark}\label{R-CondRho}
The condition on $\nabla\rho_\hbar$ in Proposition \ref{P-Monokinetic} is obviously an essential feature of this result. We shall return to its physical meaning at the end of this paper. On the other hand,
assume that the kinetic energy corresponding to the pure state defined by the wave function $\psi_\hbar\in L^2(\mathbb R^d)$ such that $\|\psi_\hbar\|_{L^2(\mathbb R^d)}=1$ is uniformly bounded in $\hbar$:
\[
(\psi_\hbar|-\tfrac{\hbar^2}{2m}\Delta\psi_\hbar)_{L^2(\mathbb R^d)}=\tfrac{\hbar^2}{2m}\|\nabla\psi_\hbar\|^2_{L^2(\mathbb R^d)}
=\tfrac1{(2\pi)^d}\int_{\mathbb R^d}\tfrac{\hbar^2}{2m}|\xi|^2|\mathcal F_{x\to\xi}\psi_\hbar(\xi)|^2\dd \xi\le C.
\]
Then
\[
\sup_{\hbar>0}\int_{|\xi|>R/\hbar}|\mathcal F_{x\to\xi}\psi_\hbar(\xi)|^2d\xi\le(2\pi)^d\frac{2mC}{R^2}\to 0\quad\text{ as }R\to+\infty.
\]
See (1.28) and (1.26) in \cite{gerard1997homogenization} for a more general sufficient condition leading to that conclusion than the boundedness of the kinetic energy. In other words the family $\psi_\hbar$ 
is \emph{$\hbar$-oscillatory} in the terminology of Definition 1.6 in \cite{gerard1997homogenization}. Since $\widehat{\rho}_{\hbar}(\xi)=|\mathcal F_{x\to\xi}\psi_\hbar(\xi)|^2$ (see \eqref{DefHatRho} for the
definition of $\widehat{\rho}_\hbar$), one has
\[
\int_{|\zeta|>R}\frac1{\hbar^d}\widehat{\rho}_\hbar\left(\frac\zeta\hbar\right)d\zeta=\int_{|\xi|>R/\hbar}|\mathcal F_{x\to\xi}\psi_\hbar(\xi)|^2d\xi.
\]
Thus, the tightness of $\hbar^{-d}\widehat{\rho}_{\hbar}(\xi/\hbar)$ is equivalent to $\psi_\hbar$ being $\hbar$-oscillatory.
\end{remark}

\smallskip
Here are some examples of families $\psi_\hbar$ satisfying the condition $\hbar\|\nabla\rho_\hbar\|_{L^2(B(0,R))}\to 0$ as $\hbar\to 0^+$.

\begin{example} (WKB states) Set
\[
\psi_\hbar(x):=a_\hbar(x)e^{iS_\hbar(x)/\hbar},
\]
with $a_\hbar$ and $S_\hbar$ of class $C^1$ and real-valued on $\mathbb R^d$. Then $\rho_\hbar(x)=a_\hbar(x)^2$ and
\[
\hbar^2\|\nabla\rho_\hbar(x)\|^2_{L^2(B(0,R))}=4\hbar^2\int_{B(0,R)}a_\hbar(x)^2|\nabla a_\hbar(x)|^2\dd x.
\]
This is to be compared with 
\[
\begin{aligned}
\hbar^2\|\nabla\psi_\hbar\|^2_{L^2(B(0,R))}=&\int_{B(0,R)}|\hbar\nabla a_\hbar(x)+ia_\hbar(x)\nabla S_\hbar(x)|^2\dd x
\\
=&\int_{B(0,R)}(\hbar^2|\nabla a_\hbar(x)|+a_\hbar(x)^2|\nabla S_\hbar(x)|^2)\dd x.
\end{aligned}
\]
\end{example}

\begin{example}(Coherent states)
Set 
\[
\psi_{q,p}(x):=(\pi\hbar)^{-d/4}e^{-|x-q|^2/2\hbar}e^{ip\,\cdot(x-q/2)/\hbar}
\]
for $q,p\in\mathbb R^d$. Then
\[
\rho_\hbar(x)=(\pi\hbar)^{-d/2}e^{-|x-q|^2/\hbar}
\]
so that
\[
\hbar\nabla\rho_\hbar(x)=2(x-q)\rho_\hbar(x)
\]
and
\[
\begin{aligned}
\hbar^2\|\nabla\rho_\hbar\|^2_{L^2(\mathbb R^d)}=&4\int_{\mathbb R^d}|z|^2(\pi\hbar)^de^{-2|z|^2/\hbar}\dd z
\\
=&\hbar^{1-\frac{d}2}\int_{\mathbb R^d}\tfrac{4}{\pi^d}|y|^2e^{-2|y|^2}\dd y=\tfrac{d}{(4\pi)^{d/2}}\hbar^{1-\frac{d}2}\to 0
\end{aligned}
\]
if and only if $d=1$.
\end{example}

\begin{example}
Here is another class of examples, very close to the case of coherent states:
\[
\psi_\alpha(x):=\hbar^{-d\alpha/2}a(x/\hbar^\alpha)e^{ip\,\cdot x/\hbar},
\]
assuming that
\[
\|\psi\|^2_{L^2(\mathbb R^d)}=\hbar^{-d\alpha}\int_{\mathbb R^d}a(x/\hbar^\alpha)^2dx=\int_{\mathbb R^d}a(y)^2\dd y=1.
\]
Then
\[
\rho_\hbar(x)=\hbar^{-d\alpha}a(x/\hbar^\alpha)^2\,,\quad\nabla\rho_\hbar(x)=2\hbar^{-(d+1)\alpha}a(x/\hbar^\alpha)\nabla a(x/\hbar^\alpha)
\]
and
\[
\begin{aligned}
\hbar^2\|\nabla\rho_\hbar\|^2_{L^2(\mathbb R^d)}=&4\hbar^2\hbar^{-2(d+1)\alpha}\int_{\mathbb R^d}a(x/\hbar^\alpha)^2|\nabla a(x/\hbar^\alpha)|^2\dd x
\\
=&4\hbar^2\hbar^{-2(d+1)\alpha}\hbar^{d\alpha}\int_{\mathbb R^d}a(y)^2|\nabla a(y)|^2\dd y
\\
=&\hbar^{2-(d+2)\alpha}\|\nabla(a^2)\|^2_{L^2(\mathbb R^d)}\to 0
\end{aligned}
\]
if and only if $\alpha<\frac2{d+2}$ provided that $a^2\in H^1(\mathbb R^d)$.
\end{example}


\section{Using Corollary \ref{C-ImpliRk1} to derive Madelung's Quantum Hydrodynamic Equations}\label{sec:DerMadelung}


Here is another application of Corollary \ref{C-ImpliRk1} leading to a  a quick and very natural derivation of Madelung's quantum hydrodynamic equations. These equations were obtained for the first time in 
\cite{Madelung1926} (see especially equations (4') and (3'') in that reference). In Madelung's own words, his work \cite{Madelung1926} corresponds mostly to an analogy between Schr\"odinger's approach 
to quantum dynamics with hydrodynamics (see \cite{Madelung1926} on p. 322). 

The presentation of Madelung's hydrodynamic equations below is closer in spirit to \cite{GasserMarko}, with however significant differences --- see the remark following Theorem \ref{T-Madelung} below.

Although Madelung's system of hydrodynamic equations is essentially independent of the question of velocity averaging for the Wigner equation at first sight, we shall see that the condition used in Proposition 
\ref{P-Monokinetic}
\[
\hbar_n\|\nabla\rho_{\hbar_n}\|_{L^2(B(0,R))}\to 0,
\]
already discussed in Remark \ref{R-CondRho}, but whose physical meaning remains somewhat mysterious, has a very natural interpretation in terms of the Bohm potential that appears in Madelung's system.

Let $\psi\equiv\psi(t,x)\in\mathbb C$ be a solution of the Schr\"odinger equation
\begin{equation}\label{SchroV}
	i\hbar\partial_t\psi(t,x)=-\tfrac{\hbar^2}{2m}\Delta_x\psi(t,x)+V(x)\psi(t,x).
\end{equation}
The von Neumann equation \eqref{eq:von Neumann R tilde} for $R(t)=\ketbra{\psi(t,\cdot)}$ is recast as
\begin{equation}
    \partial_t\tilde{R}(t,x,y)+\tfrac1m\sum_{j=1}^d \partial_{x_j}(-i\partial_{y_j})\tilde{R}(t,x,y)=\frac{V(x+\frac{\hbar}{2}y)-V(x-\frac{\hbar}{2}y)}{i\hbar}\tilde{R}(t,x,y),
    \label{eq:von Neumann Madelung}
\end{equation}
where we recall that
\begin{equation}\label{R=FW}
\tilde R(t,x,y)=\psi(t,x+\tfrac\hbar{2}y)\overline{\psi(t,x-\tfrac\hbar{2}y)}=\mathcal F_{\xi\to y}W_\hbar[R](t,x,-y).
\end{equation}

Let us first discuss the case where $\psi$ is smooth and $\psi(t,x)\not=0$ for all $t$ and all $x$. Evaluating both sides of \eqref{eq:von Neumann Madelung} at $y=0$, we find  that
\begin{equation}\label{vNy=0}
    \partial_t\tilde{R}(t,x,0)+\tfrac1m\sum_{j=1}^d\partial_{x_j}(-i\partial_{y_j}\tilde{R}(t,x,0))=0. 
\end{equation}
Define the density function $\rho(t,x)$ as in \eqref{DefDensityFunc}, and the velocity field $u(t,x)\in\mathbb R^d$ by
\begin{equation}\label{RhoU}
\rho(t,x):=\tilde{R}(t,x,0)=|\psi(t,x)|^2>0,\quad\text{ and }\quad u(t,x):=\frac{-i\nabla_y\tilde R(t,x,0)}{m\tilde R(t,x,0)}\,.
\end{equation}
With these definitions, \eqref{vNy=0} takes the form of the continuity equation
\begin{equation}
    \partial_t\rho(t,x)+\nabla_x\cdot(\rho u)(t,x)=0.
    \label{eq:continuity}
\end{equation}
For the Euler (momentum) equation, we apply $-\tfrac{i}m\partial_{y_k}$ to both sides of \eqref{eq:von Neumann Madelung}, and arrive at
\begin{equation}
    \partial_t(-\tfrac{i}m\partial_{y_k}\tilde{R}(t,x,y))\!-\!\tfrac1{m^2}\sum_{j=1}^d\partial_{x_j}(\partial_{y_j}\partial_{y_k} \tilde{R}(t,x,y))\!=\!-\tfrac1m\partial_{y_k}\left(\!\tfrac{V(x+\frac{\hbar}{2}y)-V(x-\frac{\hbar}{2}y)}{\hbar}\tilde{R}(t,x,y)\!\right)\,.
    \label{eq:euler1}
\end{equation}
Specializing this equality to $y=0$, and using the quantities defined in \eqref{RhoU} shows that 
\begin{equation}\label{eq:euler2}
    \partial_t(\rho u_k)(t,x)-\tfrac1{m^2}\sum_{j=1}^d\partial_{x_j}(\partial_{y_j}\partial_{y_k}\tilde{R}(t,x,0)) =-\tfrac1m\partial_kV(x)\rho(t,x)\,.
\end{equation}
The computations leading to \eqref{eq:continuity} and \eqref{eq:euler2} is classical and have been recalled here for the reader's convenience. It remains to find a closure relation expressing $\partial_{y_j}\partial_{y_k}\tilde{R}(t,x,0)$ in terms 
of $\rho$ and $u$. This step is quite straightforward in Madelung's original approach \cite{Madelung1926}, but much less so in the Gasser-Markowich approach.

This task is greatly simplified by the identity \eqref{IdentPure} following from Lemma \ref{L-CharRk1}, recast as
\begin{equation}\label{PrePreClo}
\begin{aligned}
-\tilde R(t,x,y)\partial_{y_j}\partial_{y_k}\tilde{R}(t,x,y)-(-i\partial_{y_j}\tilde{R}(t,x,y))(-i\partial_{y_k}\tilde{R}(t,x,y))
\\
=-\tfrac14\hbar^2\left(\tilde R(t,x,y)\partial_{x_j}\partial_{x_k}\tilde{R}(t,x,y)-(\partial_{x_j}\tilde{R}(t,x,y))(\partial_{x_k}\tilde{R}(t,x,y))\right)&\,,
\end{aligned}
\end{equation}
which is evaluated to $y=0$, to find
\begin{equation}\label{PreClo}
\begin{aligned}
-\rho(t,x)\partial_{y_j}\partial_{y_k}\tilde{R}(t,x,0)-m^2\rho(t,x)^2u_j(t,x)u_k(t,x)
\\
=-\tfrac14\hbar^2\left(\rho(t,x)\partial_{x_j}\partial_{x_k}\rho(t,x)-(\partial_{x_j}\rho(t,x))(\partial_{x_k}\rho(t,x))\right)
\\
=-\tfrac14\hbar^2\rho(t,x)\left(\partial_{x_j}\partial_{x_k}\rho(t,x)-4\left(\partial_{x_j}\sqrt{\rho(t,x)}\right)\left(\partial_{x_k}\sqrt{\rho(t,x)}\right)\right)&\,.
\end{aligned}
\end{equation}
Dividing the leftmost and rightmost sides of \eqref{PreClo} by $\rho>0$ leads to the closure relation
\begin{equation}\label{Closure}
\begin{aligned}
-\partial_{y_j}\partial_{y_k}\tilde{R}(t,x,0)=m^2\rho(t,x)u_j(t,x)u_k(t,x)
\\
-\tfrac14\hbar^2\left(\partial_{x_j}\partial_{x_k}\rho(t,x)-4\left(\partial_{x_j}\sqrt{\rho(t,x)}\right)\left(\partial_{x_k}\sqrt{\rho(t,x)}\right)\right)&\,.
\end{aligned}
\end{equation}
We next insert this expression of $-\partial_{y_j}\partial_{y_k}\tilde{R}(t,x,0)$ in \eqref{eq:euler2} to find the momentum equation
\begin{equation}\label{eq:Madelung Euler}
\begin{aligned}
\partial_t(\rho u)(t,x)+&\nabla_x\cdot(\rho u^{\otimes 2})(t,x)=-\tfrac1m\rho\nabla V(x)
\\
+&\tfrac{\hbar^2}{4m^2}\nabla_x\cdot\left(\nabla_x^2\rho(t,x)-4\left(\nabla_x\sqrt{\rho(t,x)}\right)^{\otimes 2}\right)\,.
\end{aligned}
\end{equation}
Summarizing, we have explained how Lemma \ref{L-CharRk1} and its consequence the identity \eqref{IdentPure} lead to the closure relation needed in the derivation of Madelung's quantum hydrodynamical system \eqref{eq:continuity}
and \eqref{eq:Madelung Euler}. Unfortunately, the argument above uses the smoothness of $\psi$ and the fact that $\psi\not=0$ everywhere, which does not improve on \cite{Madelung1926}.

However, we shall see that the same approach as above, using Corollary \ref{C-ImpliRk1} instead of Lemma \ref{L-CharRk1}, allows us weaken the assumptions on regularity and vacuum ($\psi=0$).

\begin{theorem}\label{T-Madelung}
Let $\psi\in C([0,\tau);H^2(\mathbb R^d))$ be a solution of \eqref{SchroV} with $V\in W^{1,\infty}(\mathbb R^d)$. Assume that $\psi$ satisfies, for all $t>0$, the condition
\begin{equation}\label{psinot=0}
	\psi(t,x)\not=0,\qquad\text{ for a.e. }x\in\mathbb R^d.
\end{equation}
Then the density function $\rho:=|\psi|^2$ as in \eqref{DefDensityFunc}, and the velocity field defined by
\begin{equation}\label{DefVelField}
u(t,x):=\mathbf 1_{\rho(t,x)>0}\frac{-i\nabla_y\tilde R(t,x,0)}{m\tilde R(t,x,0)}
\end{equation}
satisfy Madelung's system of quantum hydrodynamics on $[0,\tau)\times\mathbb R^d$:
\begin{equation}\label{MadelungSyst}
\left\{
\begin{aligned}
	{}&\partial_t\rho+\nabla_x\cdot(\rho u)=0,
	\\
	&\partial_t(\rho u)+\nabla_x\cdot(\rho u^{\otimes 2})=\tfrac{\hbar^2}{4m^2}\nabla_x\!\cdot\big(\nabla^2_x\rho-4(\nabla_x\sqrt{\rho})^{\otimes 2}\big)-\tfrac1m\rho\nabla_xV.
\end{aligned}
\right.
\end{equation}
\end{theorem}

\begin{remark}\label{R-RegularityMadelung}
Our assumption that $\psi\in C([0,\tau);H^2(\mathbb R^d))$ is quite natural for solutions of the Cauchy problem for \eqref{SchroV}. Indeed, if $\psi(0,\cdot)$ is in the domain of $\mathcal H:=-\tfrac{\hbar^2}{2m}\Delta+V$, that is a 
self-adjoint operator on $L^2(\mathbb R^d)$, the solution $\psi(t,\cdot)=\exp(-it\mathcal H/\hbar)\psi(0,\cdot)$ of the Cauchy problem for \eqref{SchroV} is continuous and bounded in $t$ with values in the domain of $\mathcal H$ by
Stone's theorem. Besides, under the assumption that $V\in L^\infty(\mathbb R^d)$, the domain of $\mathcal H$ is $H^2(\mathbb R^d)$. In that case, our assumption that $\psi\in C([0,\tau);H^2(\mathbb R^d))$ is satisfied provided that
the initial data for \eqref{SchroV} belongs to $H^2(\mathbb R^d)$.
\end{remark}

\begin{proof}
First we recall that, if $\phi\in L^2(\mathbb R^d)$, the function $y\mapsto\phi(x+\tfrac\hbar{2}y)\overline{\phi(x\!-\!\tfrac\hbar{2}y)}$ belongs\footnote{Indeed, for all $u\in L^2(\mathbb R^d)$, one has
\[
\int_{\mathbb R^d}|u(x+z)-u(x)|^2dx\to 0\quad\text{ as }|z|\to 0\,.
\]
}
to $C_b(\mathbb R^d_y;L^1(\mathbb R^d_x))$. Applying this to the functions $\partial_\ell\psi(t,\cdot)$ and $\partial_j\partial_k\psi(t,\cdot)\in L^2(\mathbb R^d)$ justifies setting $y=0$ in \eqref{eq:von Neumann Madelung} and
in \eqref{eq:euler1} to arrive at \eqref{vNy=0} and \eqref{eq:euler2}, once these equalities are written in the sense of distributions in $(t,x)$, meaning that both sides of each equality are integrated against smooth, compactly 
supported test functions in $(t,x)$. 

Then, the equality \eqref{PreClo} is justified in the same manner from \eqref{PrePreClo}, which follows from Corollary \ref{C-ImpliRk1}, only assuming Sobolev $H^2$ regularity in the position variable.

It remains to justify \eqref{Closure}. Since we have assumed that $\psi\in C_b([0,\tau);H^2(\mathbb R^d_x))$, the functions $\partial_{y_j}\partial_{y_k}\tilde{R}\big|_{y=0}$ and $\partial_{x_j}\partial_{x_k}\rho$ both belong to 
$C_b([0,\tau);L^1(\mathbb R^d))$. Next we use the Sobolev embedding theorem to see that 
\[
\psi\text{ and }\partial_{x_\ell}\psi\in C_b(\mathbb R_t;H^{1/4}(\mathbb R^d))\subset C_b([0,\tau);L^p(\mathbb R^d))\,,\qquad p:=\tfrac{4d}{2d-1}\,,
\]
so that
\[
\partial_{x_j}\partial_{x_k}\rho=\overline{\psi}\partial_{x_j}\partial_{x_k}\psi+\psi\partial_{x_j}\partial_{x_k}\overline{\psi}
	+\partial_{x_j}\overline{\psi}\partial_{x_k}\psi+\partial_{x_j}\psi\partial_{x_k}\overline{\psi} \in C_b([0,\tau);L^q(\mathbb R^d))
\]
with $q:=\tfrac{4d}{4d-1}>1$. 

At this point, we apply Theorem 1 (ii) of \cite{lionsvillani} and conclude that 
\[
\nabla\sqrt{\rho}\in C_b([0,\tau);W^{1,2q}(\mathbb R^d)\,,\quad\text{ so that }(\partial_{x_j}\sqrt{\rho})(\partial_{x_k}\sqrt{\rho})\in C_b([0,\tau);L^q(\mathbb R^d))\,.
\]
Therefore \eqref{PreClo} can be put in the form
\[
m^2\rho^2u_ju_k=\rho\left(\tfrac14\hbar^2\partial_{x_j}\partial_{x_k}\rho-\hbar^2\left(\partial_{x_j}\sqrt{\rho}\right)\left(\partial_{x_k}\sqrt{\rho}\right)-\partial_{y_j}\partial_{y_k}\tilde{R}\Big|_{y=0}\right)\,,
\]
and we have seen that the term between parenthesis on the right-hand side above satisfies
\[
\tfrac14\hbar^2\partial_{x_j}\partial_{x_k}\rho-\hbar^2\left(\partial_{x_j}\sqrt{\rho}\right)\left(\partial_{x_k}\sqrt{\rho}\right)-\partial_{y_j}\partial_{y_k}\tilde{R}\Big|_{y=0}\in L^1_{loc}([0,\tau)\times\mathbb R^d_x)\,.
\]
Since $\rho>0$ a.e. in $\mathbb R_t\times\mathbb R^d_x$, we conclude that
\[
m^2\rho u_ju_k=\tfrac14\hbar^2\partial_{x_j}\partial_{x_k}\rho-\hbar^2\left(\partial_{x_j}\sqrt{\rho}\right)\left(\partial_{x_k}\sqrt{\rho}\right)-\partial_{y_j}\partial_{y_k}\tilde{R}\Big|_{y=0}\text{ a.e. in }\mathbb R_t\times\mathbb R^d_x\,.
\]
In particular $\rho u_ju_k\in L^1_{loc}([0,\tau)\times\mathbb R^d)$, and \eqref{Closure} holds as an equality between elements of $L^1_{loc}([0,\tau)\times\mathbb R^d)\subset\mathcal D'((0,\tau)\times\mathbb R^d)$.

The expression of $-\partial_{y_j}\partial_{y_k}\tilde{R}(t,x,0)$ in \eqref{Closure} is substituted in the left-hand side of \eqref{eq:euler2} to arrive at \eqref{eq:Madelung Euler}, which concludes the proof.
\end{proof}

\begin{remark}
Notice that our proof of Theorem \ref{T-Madelung} does not require knowing in advance that $\rho u_ju_k\in L^1_{loc}(\mathbb R_t\times\mathbb R^d)$. Indeed, since we already know that the quantity $\partial_{y_j}\partial_{y_k}\tilde{R}(t,x,0)$
belongs to $L^1_{loc}(\mathbb R_t\times\mathbb R^d)$, it is enough to compute it on the complement of a set of Lebesgue measure $0$, specifically on the complement of $\rho^{-1}(\{0\})$. 

However, one has the following information on $\rho u_ju_k$. Let us recall the following well-known formula for the current
\[
J_\ell=-\tfrac{i\hbar}{2m}(\overline\psi\partial_{x_\ell}\psi-\psi\partial_{x_\ell}\overline\psi)\,,
\]
so that
\[
\begin{aligned}
|J|^2=&\tfrac{\hbar^2}{4m^2}\sum_{\ell=1}^d\left|\overline\psi\partial_{x_\ell}\psi-\psi\partial_{x_\ell}\overline\psi\right|^2
\\
=&\tfrac{\hbar^2}{4m^2}\sum_{\ell=1}^d\left(2|\psi|^2|\partial_{x_\ell}\psi|^2-\psi^2\left(\partial_{x_\ell}\overline\psi\right)^2-\overline\psi^2\left(\partial_{x_\ell}\psi\right)^2\right)\,.
\end{aligned}
\]
At this point, we use the Sobolev embedding $H^1(\mathbb R^d)\subset L^p(\mathbb R^d)$ for all $p\in[2,\tfrac{2d}{d-2})$ if $d\ge 2$ while $H^1(\mathbb R)\subset L^\infty(\mathbb R)$, to find that
\[
\frac{|J|^2}\rho=\tfrac{\hbar^2}{4m^2}\sum_{\ell=0}^d\left(2|\partial_{x_\ell}\psi|^2-\frac\psi{\overline\psi}\left(\partial_{x_\ell}\overline\psi\right)^2-\frac{\overline\psi}\psi\left(\partial_{x_\ell}\psi\right)^2\right)
\in L^\infty(\mathbb R_t;L^p(\mathbb R^d_x))
\]
with $2\le p<\tfrac{2d}{d-2}$ if $d\ge 2$ and $p=\infty$ if $d=1$, since $\psi\in C_b(\mathbb R_t;H^2(\mathbb R^d))$ and $\psi\not=0$ a.e. on $\mathbb R_t\times\mathbb R^d$. Indeed, $\psi/\overline\psi$ and $\overline\psi/\psi$ are well-defined
measurable functions on the complement of $\psi^{-1}(\{0\})$, assumed to be a set of Lebesgue measure $0$, so that $\psi/\overline\psi$ and $\overline\psi/\psi\in L^\infty(\mathbb R_t\times\mathbb R^d_x)$ --- see next footnote. By the 
Cauchy-Schwarz inequality, this implies in turn that $\rho u_ju_k\in L^\infty(\mathbb R_t;L^p(\mathbb R^d_x))$ for the same values of $p$ as above.
\end{remark}

\begin{remark}\label{R-MadelungGasserMarko}
Madelung's original derivation of \eqref{MadelungSyst} is based on a representation of the wave function in the form $\psi(t,x)=\alpha(t,x)e^{im\phi(t,x)/\hbar}$ with real-valued $\alpha$ and $\phi$, and his momentum equation (equation (3') 
in \cite{Madelung1926}) takes the form of a Hamilton-Jacobi equation with a potential that is a linear combination of $V$ and $\Delta_x\alpha/\alpha$. Madelung's derivation implicitly assumes that $\psi\not=0$ everywhere. It seems that Gasser 
and Markowich \cite{GasserMarko} were the first to attempt deriving \eqref{MadelungSyst} as the system of moments in the momentum variable of the Wigner function of $\psi$. This is a very natural idea if one thinks of the Wigner equation 
as a bona fide kinetic equation. In particular, this approach leads to a momentum equation in \eqref{MadelungSyst} in conservation form, which is more natural from the hydrodynamic viewpoint, and may alleviate requirements on the positivity of 
the density function $|\psi|^2$. 

However the derivation of the Madelung system in \cite{GasserMarko} is incorrect and/or incomplete for the following reasons. 

Firstly, Lemma 2.1 in \cite{GasserMarko} does not make any assumption on the set $\psi^{-1}(\{0\})$, where $\psi$ is the solution of \eqref{SchroV} whose density function and momentum density are expected to satisfy the Madelung system. 
Yet the first step in the proof of Lemma 2.1 in \cite{GasserMarko} is a claim that $(\nabla_x\rho)^{\otimes 2}/\rho\in L^\infty(\mathbb R_t;L^1(\mathbb R^d))$, based on the fact that the ratio $\overline{\psi}/\psi$ is a well-defined element of 
$L^\infty(\mathbb R_t\times\mathbb R^d)$, a statement known to be true if and only if\footnote{Obviously, if $\psi$ is Borel measurable on $\mathbb R^d$ and $\psi\not=0$ a.e., then $\overline\psi/\psi$ is Borel measurable on the complement 
of the set $\mathcal N:=\psi^{-1}(\{0\})$ which is of Lebesgue measure $0$, so that $\overline\psi/\psi$ is measurable on $\mathbb R^d$, and $|\overline\psi/\psi|=1$ a.e. on $\mathbb R^d$ so that $\overline\psi/\psi\in L^\infty(\mathbb R^d)$.
If $\mathcal N$ is of Lebesgue measure $|\mathcal N|>0$, one has $\psi=|\psi|u$ with
\[
u=\frac{\psi+\mathbf 1_\mathcal Ne^{i\Theta}}{|\psi|+\mathbf 1_\mathcal N}\,,\qquad|u|=1\,,\quad\text{ so that one can set }
\frac{\overline\psi}{\psi}:=\frac{\overline u}{u}=\frac{\overline{\psi}+\mathbf 1_\mathcal Ne^{i\Theta}}{\psi+\mathbf 1_\mathcal Ne^{i\Theta}}=\frac{\overline\psi}\psi\mathbf 1_{\mathcal N^c}+e^{-2i\Theta}\mathbf 1_\mathcal N\,,
\]
where $\Theta$ is an arbitrary real-valued function. Here again $|\overline\psi/\psi|=1$, but $\overline\psi/\psi$ is not a well-defined element of $L^\infty(\mathbb R^d)$, since this definition of $\overline\psi/\psi$ explicitly involves the function 
$\Theta$ which is arbitrary on the set $\mathcal N$ of positive Lebesgue measure. One could even choose a non measurable $\Theta$, in which case $\overline\psi/\psi\notin L^\infty(\mathbb R^d)$.} the set $\psi^{-1}(\{0\})$ is of Lebesgue 
measure $0$. 

Secondly, even if this missing assumption is added to the statement of Lemma 2.1 in \cite{GasserMarko}, the remaining part of its proof is left to the reader as an ``easy exercise'' --- in the words of the authors of \cite{GasserMarko}. 
Deriving the local conservation law of momentum \eqref{eq:euler2} from \eqref{SchroV} is classical --- even though the second term on the right-hand side is more often written in the equivalent form
\[
-\partial_{y_j}\partial_{y_k}\tilde R\big|_{y=0}=\int_{\mathbb R^d}\xi_j\xi_kW_\hbar[R]d\xi\,.
\]
Starting from \eqref{eq:euler2}, the closure relation \eqref{Closure} is an almost straightforward consequence of our identity \eqref{IdentBisPure}, already used in the proof of our Proposition 2.

Another merit of this approach is that it leads to a structure of the Madelung pressure tensor (see \eqref{DefQuantPress} below) involving the expression
\[
\nabla_x^2\rho-4(\nabla_x\sqrt{\rho})^{\otimes 2}\quad\text{ instead of }\quad\rho\nabla_x^2\ln\rho\,,
\]
which is seen to be a $L^1_{loc}$ function as a consequence of the Lions-Villani observation \cite{lionsvillani} on the Sobolev regularity of square roots of nonnegative functions. This property seems less obvious on
the expression $\rho\nabla_x^2\ln\rho$ more commonly found in the literature as in \cite{Madelung1926}.

Finally, allowing $\psi(t,\cdot)$ to vanish on a set of measure $0$ is not only a minor technical improvement over the case where $\psi(t,x)\not=0$ for all $x\in\mathbb R^d$. There are situations of physical interest involving nonlinear 
variants of \eqref{SchroV} and isolated points $x_j(t)$ where $\psi(t,x_j(t))=0$, such as vortex dynamics in the Ginzburg-Landau equation, where Madelung's quantum hydrodynamics appear naturally: see for instance \cite{LinXin}.
\end{remark}

\smallskip
If moreover $\rho\in C^2([0,\tau)\times\mathbb R^d_x)$ is such that $\rho>0$ everywhere on $[0,\tau)\times\mathbb R^d_x$, there is another equivalent form of the Euler equation \eqref{eq:Madelung Euler}, 
\begin{equation}\label{eq:Madelung Euler QuantPress}
\partial_t(\rho u)+\nabla_x\cdot(\rho (u\otimes u+\tfrac1m\Pi))=-\tfrac1m\rho\nabla_xV,
\end{equation}
with pressure \emph{tensor}
\begin{equation}\label{DefQuantPress}
\Pi:=-\tfrac{\hbar^2}{4m}\nabla^2_x\ln\rho.
\end{equation}
Indeed, if $\rho\in C^2([0,\tau)\times\mathbb R^d_x)$ satisfies $\rho>0$ on $[0,\tau)\times\mathbb R^d_x$, one has
\[
\rho\nabla^2_x\ln\rho=\rho\nabla_x\left(\frac{\nabla_x\rho}{\rho}\right)=\nabla_x^2\rho-\frac{(\nabla_x\rho)^{\otimes 2}}{\rho}=\nabla_x^2\rho-4(\nabla_x\sqrt{\rho})^{\otimes 2}.
\]

Still another equivalent form of Madelung's system requires checking the formula for the Bohm quantum potential. Assuming that $\rho\in C^3([0,\tau)\times\mathbb R^d_x)$ is such that $\rho>0$ everywhere on 
$[0,\tau)\times\mathbb R^d_x$, observe that
\[
\begin{aligned}
\nabla_x\cdot(\rho\nabla^2_x\ln\rho)=&\nabla^2_x\ln\rho\cdot\nabla_x\rho+\rho\nabla_x\cdot\nabla^2_x\ln\rho
\\
=&\rho(\nabla^2_x\ln\rho\cdot\nabla_x\ln\rho+\nabla_x\Delta_x\ln\rho)
\\
=&\rho\nabla_x(\tfrac12|\nabla_x\ln\rho|^2+\Delta_x\ln\rho)
\\
=&2\rho\nabla_x\left(\left|\frac{\nabla_x\sqrt{\rho}}{\sqrt{\rho}}\right|^2+\nabla_x\cdot\left(\frac{\nabla_x\sqrt\rho}{\sqrt\rho}\right)\right)=2\rho\nabla_x\left(\frac{\Delta_x\sqrt\rho}{\sqrt\rho}\right)\,.
\end{aligned}
\]
Introducing the Bohm quantum potential
\begin{equation}\label{BohmQuantPot}
P:=-\tfrac{\hbar^2}{2m}\frac{\Delta_x\sqrt\rho}{\sqrt\rho}\,,
\end{equation}
the Euler equation in Madelung's system is recast as
\begin{equation}\label{eq:Madelung Euler BohmPot}
\partial_tu+u\cdot\nabla_xu=-\tfrac1m\nabla_x(P+V)\,.
\end{equation}

\smallskip
Finally, let us discuss the condition $\hbar^2\|\nabla\rho_\hbar\|^2_{L^2(\mathbb R^d)}\to 0$ in Proposition \ref{P-Monokinetic}. Let us compute
\[
\rho^2_\hbar\mathrm{Tr}(\nabla^2\ln\rho_\hbar)=\rho^2_\hbar\Delta\ln\rho_\hbar=2\rho^2_\hbar\nabla\cdot\left(\frac{\nabla\sqrt{\rho_\hbar}}{\sqrt{\rho_\hbar}}\right)
=2\rho^2_\hbar\left(\frac{\Delta\sqrt{\rho_\hbar}}{\sqrt{\rho_\hbar}}-\frac{|\nabla\sqrt{\rho_\hbar}|^2}{\rho_\hbar}\right),
\]
and
\[
\rho^2_\hbar\frac{\Delta\sqrt{\rho_\hbar}}{\sqrt{\rho_\hbar}}=\rho^{3/2}_\hbar\Delta\sqrt{\rho_\hbar}=\Delta(\rho^2_\hbar)-3\rho_\hbar|\nabla\sqrt{\rho_\hbar}|^2=\Delta(\rho^2_\hbar)-\tfrac34|\nabla\rho_\hbar|^2.
\]
These identites can be recast as
\[
\rho^2_\hbar\mathrm{Tr}\Pi_\hbar=\rho^2_\hbar P_\hbar+\tfrac{\hbar^2}{8m}|\nabla\rho_\hbar|^2
\quad\text{ and }\quad\rho^2_\hbar P_\hbar=-\tfrac{\hbar^2}{2m}\Delta(\rho^2_\hbar)+\tfrac38\tfrac{\hbar^2}m|\nabla\rho_\hbar|^2,
\]
or, equivalently
\[
\hbar^2|\nabla\rho_\hbar|^2=\tfrac43\hbar^2\Delta(\rho^2_\hbar)+\tfrac83m\rho^2_\hbar P_\hbar=\hbar^2\Delta(\rho^2_\hbar)+2m\rho^2_\hbar\mathrm{Tr}\Pi_\hbar.
\]

Returning to Proposition \ref{P-Monokinetic}, observe that the sequence $\hbar^2_n\rho^2_{\hbar_n}\to 0$ since $\rho_{\hbar_n}$ is bounded in $L^2(B(0,R))$. Hence
\[
\hbar^2_n\Delta(\rho^2_{\hbar_n})\to 0\quad\text{ in }\mathcal D'(\mathbb R^d)\qquad\text{ as }\hbar_n\to 0.
\]
Therefore
\[
\hbar_n^2\|\nabla\rho_{\hbar_n}\|^2_{L^2(B(0,R)}\to 0\iff\rho^2_{\hbar_n}P_{\hbar_n}\to 0\text{ in }\mathcal D'(\mathbb R^d)\iff\rho^2_{\hbar_n}\mathrm{Tr}\Pi_{\hbar_n}\to 0\text{ in }\mathcal D'(\mathbb R^d).
\]
In other words, the condition $\hbar_n^2\|\nabla\rho_{\hbar_n}\|^2_{L^2(B(0,R)}\to 0$ is equivalent to the fact that the quantum pressure $\Pi_{\hbar_n}$, or the Bohm potential $P_{\hbar_n}$, multiplied by 
$\rho_{\hbar_n}^2$ converges to $0$ in the sense of distributions on $\mathbb R^d$.

\begin{remark}
In view of the formulas \eqref{eq:restriction density}, \eqref{eq:restriction current} and \eqref{eq:restriction energy}, one can think of the Madelung system as equations for the moments in the $\xi$-variable
of $W_\hbar[\ketbra{\psi_\hbar}]$ in the classical (vanishing $\hbar$) limit. By comparison with the way in which fluid dynamical equations are derived from the kinetic theory of gases, one should think of
Wigner measures $w$, i.e. limit of converging subsequences $W_{\hbar_n}[\ketbra{\psi_{\hbar_n}}]$ in $\mathcal S'(\mathbb R^d\times\mathbb R^d)$, as the analogue of the local Maxwellian distribution
function in gas dynamics, i.e.
\[
\frac{\rho(t,x)}{(2\pi\theta(t,x))^{3/2}}\exp\left(-\frac{|v-u(t,x)|^2}{2\theta(t,x)}\right).
\]
For ideal gases, such as are described by the Boltzmann equation, the equation of state gives the pressure $p=\rho\theta$ in terms of the density $\rho$ and temperature $\theta$ (choosing units such that
the Boltzmann constant is $1$). Therefore, if $p(t,x)=0$ and $\rho(t,x)>0$, one has $\theta(t,x)=0$, in which case the Maxwellian must be replaced with
\[
\rho(t,x)\delta(v-u(t,x)).
\]
since $e^{-|v|^2/2\theta}/(2\pi\theta)^{3/2}\to\delta_0(v)$ in $\mathcal S'(\mathbb R^3)$ as $\theta\to 0^+$.

The situation described in Proposition \ref{P-Monokinetic} does not involve thermodynamical quantities such as a temperature. However the quantum pressure, or the Bohm potential, plays a role similar to
that of the classical pressure in gas dynamics: the vanishing of the quantum pressure implies that the limiting Wigner measure is monokinetic. However, this result is obtained in a completely different manner,
since one does not have any explicit formula representing the distribution of $\xi$ in $w$ in terms of the density function $\rho$, the current  density $J$ of the velocity field $u$ in \eqref{DefVelField} and the 
quantum pressure $\Pi$ in \eqref{DefQuantPress}.
\end{remark}

\noindent
\textbf{Acknowledgements.}
We wish to thank Shi Jin for introducing us to the problem discussed in this paper, and Norbert Mauser for suggesting several references closely related to our approach (especially the work of Gasser and
Markowich on Madelung's quantum hydrodynamics). We are also grateful to Thomas Alazard and Herbert Koch for valuable insight on the vacuum problem in Madelung's equations.


\bibliographystyle{abbrv}
\bibliography{wigneraveraging}


\end{document}